\newtheorem{thm}{Theorem}[section]
\theoremstyle{definition}
\newtheorem{Alg}[thm]{Algorithm}
\theoremstyle{remark}
\newtheorem{re}[thm]{Remark}
\newcommand{\RNum}[1]{\uppercase\expandafter{\romannumeral #1\relax}}
\numberwithin{equation}{section}
\theoremstyle{definition}
\theoremstyle{remark}
\renewcommand{\vec}[1]{\bm{#1}}
\newcommand{\RN}[1]{%
  \textup{\uppercase\expandafter{\romannumeral#1}}%
}
\newcommand{\Rey}{\mathcal{R}e }
\newcommand{\bff}{\mathbf{f}}
\newcommand{\bfu}{\mathbf{u}}
\newcommand{\bfv}{\mathbf{v}}
\newcommand{\bfw}{\mathbf{w}}
\newcommand{\bfX}{\mathbf{X}}
\newcommand{\bfV}{\mathbf{V}}
\newcommand{\bfx}{\mathbf{x}}
\newcommand{\bfn}{\mathbf{n}}
\def \lb {\langle}
\def \rb {\rangle}
\newenvironment{Alirev}{\color{blue}}{\color{black}}
\newcommand{\AAA}{\begin{Alirev}}
\newcommand{\PPP}{\end{Alirev}}
\title[DA in LES: Handling Model-Observation Mismatch]{Data Assimilation in Large Eddy Simulation: Addressing Model-Observation Mismatch from Navier-Stokes Data}
\author{Adam Larios}
\address[Adam Larios]{Department of Mathematics, 
                 University of Nebraska--Lincoln,
                 Lincoln, NE 68588-0130, USA}
\email{alarios@unl.edu}
\author{Ali Pakzad}
\address[Ali Pakzad]{Department of Mathematics, 
                California State University Northridge,
        Northridge, CA 91330, USA}
\email{pakzad@csun.edu}
\author{Nicholas White}
\address[Nicholas White]{Department of Mathematics, 
                University of Nebraska--Lincoln,
        Lincoln, NE 68588-0130, USA}
\email{nwhite17@huskers.unl.edu}
\date{\today}
\subjclass[2010]{Primary 35Q30, 76F65, 93E11; Secondary 35K55,  76D05, 35B40}
\keywords{Navier--Stokes equations, Large Eddy Simulation, Continuous Data Assimilation, Nudging, Azouani-Olson-Titi}
\begin{document}

\begin{abstract}
In atmospheric and turbulent flow modeling, Large Eddy Simulation (LES) is often used to reduce computational cost, while observational data typically originates from the underlying physical system. Motivated by this setting, we study a continuous data assimilation (CDA) algorithm applied to a Smagorinsky/Ladyzhenskaya-type LES model, in which the observational data is generated from the full Navier--Stokes equations (NSE). In the two-dimensional setting, we establish global well-posedness of the assimilated system and prove exponential convergence to the true solution, up to an error of order $\bar{\nu}^{1/2}$, where $\bar{\nu}$ is the turbulence viscosity parameter. In addition to rigorous analysis in 2D, we provide numerical simulations in both 2D domains with physical boundary conditions and 3D periodic domains, demonstrating effective synchronization in these cases, and corroborating our theoretical predictions.
\end{abstract}

\maketitle
\tableofcontents

\section{Introduction}\noindent
Combining incomplete observational data with simulation and dynamical models through data assimilation (DA) techniques is a common practice to improve weather forecasts \cite{K03}. In real-world weather simulations, Large Eddy Simulation (LES) models are often preferred over the full Navier-Stokes equations, as they reduce computational costs and stabilize under-resolved simulations \cite{P04}. Combining data assimilation with LES is a natural extension of this approach (see, e.g., \cites{Albanez_Nussenzveig_Lopes_Titi_2016,Cao_Giorgini_Jolly_Pakzad_2022,Chen_Li_Lunasin_2021,Larios_Pei_2018_NSV_DA}).  However, a challenge arises because observational data does not directly come from the LES model, but rather from the underlying physical model, which one can take to be, e.g., the Navier-Stokes Equations (NSE). With a focus on a particular DA algorithm, this paper investigates the mismatches between observational data (originating from the NSE) and an LES model. We provide an in-depth analysis of the errors that result from this mismatch, including both theoretical analysis and simulation results. 

\subsection{Previous works}
\addcontentsline{toc}{section}{1.1. Previous works}
The concept of using data to enhance the predictive capabilities of mathematical models dates back to Kalman’s groundbreaking work in 1960 \cite{K60}. DA is a critical tool in numerical weather prediction and turbulence modeling that builds on this idea by linking observational data with mathematical models to improve forecast accuracy. DA as a formal methodology emerged in the 1960s when Charney, Halem, and Jastrow \cite{Charney-Use1969} proposed using atmospheric motion equations to incorporate observations of the evolving atmospheric state for more accurate predictions. 
Classical DA methods are generally statistical in nature, such as the Kalman filter  \cite{K60} and its variants \cite{Law_Stuart_Zygalakis_2015_book}, as well as 3DVAR, 4DVAR, and their various adaptations (see, e.g., \cites{K03,Law_Stuart_Zygalakis_2015_book} and the references therein).  However, non-statistical approaches have recently gained in popularity due largely to the advent of interpolated nudging, introduced by Azouani, Olson, and Titi in 2014 \cites{AOT14,Azouani_Titi_2014}, referred to as the ``AOT algorithm'' or ``Continuous Data Assimilation'' (CDA). An earlier form of nudging was originally introduced in \cites{Anthes_1974_JAS,Hoke_Anthes_1976_MWR}, but had many performance issues (see, e.g., \cites{lakshmivarahan2013nudging,brocker2012sensitivity}).  In contrast, the AOT/CDA approach, which was inspired by the work of Foias and Prodi on determining modes \cite{Foias-Sur1967}, is robust, accurate, and has been successfully adapted to many physical settings, as we discuss below.

The Azouani-Olson-Titi (AOT) algorithm (also referred to as continuous data assimilation, or CDA) is  motivated by the practical difficulties in accurately initializing numerical simulations. Observational data often provides an incomplete representation of the true state of the atmosphere, and as a result, the \emph{``correct''} initial conditions for weather prediction are not available \emph{a priori}. This lack of precision in the initial conditions can lead to an exponential growth of errors due to the nonlinear chaotic nature of the underlying system. To address this, the AOT algorithm incorporates observational data directly at the partial differential equation (PDE) level by introducing an interpolated feedback control term, sometimes called ``interpolated nudging.'' This mechanism synchronizes the computed solution with the true solution corresponding to the observed data, ensuring improved predictive accuracy. The algorithm operates on a dissipative dynamical system, which we write abstractly as
\[
\frac{du}{dt} = F(u),
\]  
with an unknown initial condition (note that $F$ can be nonlinear, nonlocal, and can depend also on spatial derivatives of $u$). Here,  we assume that $u(t)$ is our true solution and some information about it is available, such as measurements at specific points (e.g. the locations of the weather stations or satellites) within the domain. From this data, we can construct an approximation of \(u(t)\), denoted by \(I_h(u(t))\), where $h>0$ is a characteristic length scale representing, e.g., the average spacing between observation points, and  $I_h: L^2(\Omega) \rightarrow L^2(\Omega) $ is a linear operator based on roughly  \(\mathcal{O}(h^{-1})\) data points.  We make the following assumptions on $I_h$, which are standard (see, e.g., \cite{AOT14}).  Namely, we assume that there exist constants $c_0>0$ and $c_I>0$, independent of $h$, such that  
\begin{equation}\label{I_h}
\begin{alignedat}{2}
\|I_h \varphi \|_{L^2(\Omega)} 
& 
\leq c_I \|\varphi\|_{L^2(\Omega)},  
& \quad &   
\text{for all}\quad \varphi \in  L^2(\Omega),  
\\ 
 \|\varphi  - I_h\, \varphi\|_{L^2(\Omega)} 
 & 
 \leq c_0 \, h \| \nabla\varphi\|_{L^2(\Omega)},
 & \quad &   
 \text{for all}\quad \varphi \in  H^1(\Omega).
\end{alignedat}
\end{equation}
These observations are integrated into the system through a feedback control term, resulting in the modified system:  
\begin{equation}\label{AOT}
\frac{dv}{dt} = F(v) + \mu \big(I_h(u) - I_h(v)\big), \quad v(t=0) = v_0,
\end{equation}
where \(\mu > 0\) is a positive relaxation (nudging) parameter chosen appropriately, and \(v_0\) is any admissible\footnote{e.g., in the case of the Navier-Stokes equations, admissibilty means that $v_0$ is assumed to satisfy boundary conditions (and a mean-free condition in the case of periodic boundary conditions), be sufficiently smooth, and be divergence-free} initial condition.  For instance, one could choose $v_0\equiv0$.  The goal is to pick $\mu >0$ and $h>0$ such that
$$ \bfv(t)\rightarrow \bfu(t),$$
expoenentially fast as $t \rightarrow \infty$ in a suitable function space.  Examples of such interpolation operators $I_h$ satisfying \eqref{I_h}  include projection onto Fourier modes with wave numbers \(|k| \leq 1/h\), volume elements, and constant finite element interpolation \cite{Biswas-Data2021, AOT14}. Numerical evidence also suggests that higher-order interpolation can accelerate synchronization through nudging \cite{JP23}.  Nonlinear nudging can also yield in super-exponential convergence \cite{Carlson_Larios_Titi_2023_nlDA,Du_Shiue_2021,Larios_Pei_2017_KSE_DA_NL}.

In the context of the 2D incompressible Navier-Stokes equations with both no-slip and periodic boundary conditions, the global well-posedness of \eqref{AOT} and exponential convergence in time to the reference solution were proven in \cite{AOT14}. Numerical experiments have been successfully conducted to test this algorithm on various nonlinear systems, including the 2D Navier-Stokes equations \cite{Gesho-Acomputational2016, LRZ19, Garcia-Archilla-Uniform2020, JP23}, the Rayleigh-Bénard equations \cite{Farhat-Assimilation2018, Altaf-Downscaling2017}, and the Kuramoto-Sivashinsky equations \cite{LT17}. Applications of continuous data assimilation to other physical systems and partial differential equations include non-Newtonian fluids \cite{Cao_Giorgini_Jolly_Pakzad_2022}, magnetohydrodynamic equations \cite{Biswas-Continuous2018}, two-phase models \cite{CLP22},  the Leray-$\alpha$ model of turbulence \cite{Jolly-Adata2017}, the quasi-geostrophic equation \cite{Jolly-Adata2017}, Darcy’s equation \cite{Markowich-Continuous2019}, KdV equations \cite{Jolly-Determining2017}, and the primitive equations \cite{Pei-Continuous2019}, among others. While these studies generally assume noise-free observations, the method has also been extended to handle noisy data, as shown in \cite{Bessaih-Continuous2015, Jolly-Continuous2019, Foias-ADiscrete2016}.  Data Assimilation has also recently found a wide range of applications in  parameter estimation \cite{CHLMNW22, FLMW24, Newey2024}. For further details, readers are referred to other recent works on this topic, such as \cite{Auroux-ANudging2008, Biswas-Data2021, Farhat-Data2020, Farhat-Continuous2015, Farhat-Continuous2017, Farhat-Data2016, FJT, FLT, Foias-ADiscrete2016, Hudson-Numerical2019, Ibdah-Fully2020, Mondaini-Uniform2018, Pawar-Long2020, ZRSI19}, and the references therein.

\subsection{Mathematical Models and PDEs}
\addcontentsline{toc}{section}{1.2. Mathematical Models and PDEs}

The incompressible, constant-density Navier-Stokes equations (NSE) constitute the basic mathematical model for fluid flow, and  contain turbulent dynamics among their solutions \cite{FMRT01}.  They are given by
\begin{equation}
 \label{NSE}
 \begin{split}
\partial_t \bfu+   (\bfu \cdot \nabla) \bfu   - \nabla \cdot  \mathbf{T}(\nabla \bfu) \ +  \nabla p  & = \bff,\\ 
\nabla \cdot \bfu  &= 0,
\end{split}
\qquad \qquad \qquad \text{in } \, \Omega \times (0,\infty),
\end{equation}
with stress tensor given by
 $$\mathbf{T}_{\text{NSE}}(\nabla \bfu)= \nu \nabla \bfu,$$
over a bounded open domain $\Omega\subset\mathbb{R}^d$, $d\in\{2,3\}$,  
where $\bfu$ is the velocity, $\bff = \bff(x, t)$ is the known body force, $p$ is the pressure, and $\nu$ is the kinematic viscosity.   
Here the initial condition $\bfu(\bfx, 0)= \bfu_0(x)$ is not assumed to be known at every point in the domain.  
In the case of physical boundaries, we consider ``no slip'' homogeneous Dirichlet conditions $\bfu\Big|_{\partial\Omega}=\mathbf{0}$.  
In the case of periodic boundary conditions ($\Omega= \mathbb{T}^d= \mathbb{R}^d/\mathbb{Z}^d$), we assume the mean-free condition 
$\int_\Omega\bfu(\bfx,t)\,d\bfx = \mathbf{0}$.

In many practical applications of fluid dynamics, direct numerical simulation (DNS) of the Navier-Stokes equations (NSE) is computationally prohibitive due to the wide range of spatial and temporal scales that must be resolved. The computational cost increases dramatically with the Reynolds number, as finer resolution is required to capture the smallest dissipative scales governed by Kolmogorov theory. Specifically, in fully developed turbulence, the smallest scales, known as the Kolmogorov length scale \(\eta = (\nu^3/\epsilon)^{1/4}\) \cite{F95}, become increasingly small as the Reynolds number grows, necessitating an extremely fine computational grid. The number of degrees of freedom required for a DNS scales as \(\text{Re}^{9/4}\) for three-dimensional turbulence \cite{L08}, making high-Reynolds-number simulations infeasible even with modern supercomputers. For example, simulating isotropic turbulence at a Reynolds number of \(96,000\) via DNS would require approximately 5000 years of computational time \cite[Page 349]{Pope_2000_bible}. This computational intractability underscores the need for alternative modeling approaches.

Large Eddy Simulation (LES) provides a computationally feasible alternative by explicitly resolving only the large-scale motions while modeling the effects of the unresolved subgrid-scale (SGS) dynamics through additional closure terms.    To address this, several modifications to the standard NSE have been proposed, especially in the mid-1960s, by pioneering works of Ladyzhenskaya \cite{Lady67,Lady68} and Smagorinsky \cite{SM63}.  One notable modification is the introduction of a nonlinear form for the Cauchy stress tensor in \eqref{NSE}, which is typically given by:
\begin{equation*}
\label{T-LES}
\mathbf{T}_{\text{LES}} (\nabla \bfu) = \left( \nu  + \bar{\nu} \,  |\nabla \bfu|_F^{p-2} \right)\, \nabla \bfu,
\end{equation*}
where \(\nu\) is the usual  molecular viscosity and \(\bar{\nu}\) is the effective viscosity accounting for the unresolved scales. The term \(|\nabla \bfu|_F^{p-2}\) represents the gradient of the velocity field raised to a power \(p-2\), introducing a nonlinear correction that accounts for the turbulent energy cascades at smaller scales.   LES is widely recognized as one of the most promising approaches \cite{PeleLM-FDF2022, Sharan2019ShearLayer} to significantly reducing computational demand while still capturing the essential turbulent features of the flow \cite{ Wilcox06}. 

To apply the Data Assimilation (DA) algorithm within the Large Eddy Simulation (LES) framework, it is essential to account for the fact that observational data is typically obtained from real-world measurements or high-fidelity simulations based on the full Navier-Stokes Equations. With this in mind, we propose the following nonlinear system over the domain \(\Omega\): 

\begin{equation}
\label{DALady}
 \begin{split}
\partial_t \mathbf{v} + (\mathbf{v} \cdot \nabla) \mathbf{v} - \nu \Delta \mathbf{v} - \nabla \cdot \left( \bar{\nu} \, |\nabla \mathbf{v}|_F^{p-2} \, \nabla \mathbf{v} \right) + \nabla q &= \mathbf{f} + \mu \, I_h(\underbrace{\bfu}_{\text{NSE}}) - \mu \, I_h(\mathbf{v}), \\
\nabla \cdot \mathbf{v} &= 0,
\end{split}
\end{equation}
where the initial condition \(\mathbf{v}_0\) is arbitrary, and \(\mathbf{u}\) represents the solution to the underlying physical model- the Navier-Stokes Equations \eqref{NSE}, from which an interpolation of the observed data is constructed, denoted as \(I_h(\mathbf{u}(t))\).

\subsection{Results of this paper}
\addcontentsline{toc}{section}{1.3. Results of this paper}
 
In this paper, we conduct theoretical analysis and computational  investigations   for the nonlinear nudged  system \eqref{DALady}, examining its synchronization properties with respect to the ``true'' solution of the Navier-Stokes equations. The primary focus is on the two-dimensional (2D) case due to the required regularity of NSE solutions. However, the analysis can be extended to three dimensions (3D), provided that additional regularity assumptions are imposed on the 3D NSE (see \cite{Biswas_Price_2020_AOT3D} for the AOT algorithm in the 3D NSE).

We first provide a formal proof of the global existence of solutions to the system \eqref{DALady} using a priori estimates, which can be made rigorous using, e.g., Galerkin methods.  We then demonstrate that, under appropriate conditions on the nudging parameters \( h \) and \( \mu \), and given a sufficiently developed\footnote{That is, there exists a time $t_0>0$ depending on $\|\bfu_0\|_{L^2}$, such that exponentially fast synchronization happens for $t>t_0$.  In numerical simulations, we observe the exponential rate almost immediately.} reference flow, the nudged solution \( \mathbf{v} \) asymptotically synchronizes exponentially fast in time with the Navier-Stokes solution \( \mathbf{u} \), up to an error that is sublinearly bounded by the turbulence viscosity coefficient \( \bar{\nu} \). Furthermore, as \( \bar{\nu} \to 0 \), this error vanishes as\footnote{We write $a\lesssim b$ to denote $a\leq C b$ for some $C>0$ independent of $a$ and $b$.}
 \begin{equation*}
\|\bfu - \bfv\|_{L^2} \lesssim   \, \, \bar{\nu}^{1/2}\hspace{0.5cm} \text{as}  \hspace{0.5cm} t \rightarrow \infty.
\end{equation*} 
A key innovation of this work is the assimilation of observational data derived from the full NSE rather than from another LES system. This approach introduces significant analytical challenges, particularly due to the presence of the nonlinear \( p \)-Laplacian term in the equation \eqref{DALady}, which does not appear in the reference system \eqref{NSE}. Consequently, the error equation for \( \mathbf{w} = \mathbf{u} - \mathbf{v} \) involves a turbulent viscosity term of the form 
\[
\nabla \cdot ( |\nabla \mathbf{v}|^{p-2} \nabla \mathbf{v} ),
\]
which depends solely on \( \mathbf{v} \) rather than on both \( \mathbf{u} \) and \( \mathbf{v} \). This structural asymmetry prevents the direct application of standard monotonicity arguments typically used in error analysis (see, e.,g \cite{Lady68}). To overcome this difficulty, we reformulate the  above problematic term in terms of \( \mathbf{u} \) and \( \mathbf{w} \), which introduces additional  nonlinear terms that must be carefully controlled. 

We also  assess the performance of our algorithm through computational experiments. Prior numerical investigations on similar systems have shown that the nudging method achieves synchronization with observational data, even without the strict constraints on \( \mu \) and \( h \) required by theoretical estimates \cite{JP23, Farhat-Data2020, Farhat-Assimilation2018, ZRSI19, FLMW24 }. Our results exhibit a comparable behavior in the case of nonhomogeneous boundary conditions, where we observe exponential convergence up to a threshold that matches our analytical prediction, specifically $\|\bfu - \bfv\| \sim \bar{\nu}^{1/2}.$  All simulations are performed for \( p = 3 \), which corresponds to the Smagorinsky model in Large Eddy Simulation  \cite{SM63, L08}, and Germano dynamic model \cite{GPMC91}.  Note that the present work is one of the few works currently includes simulations of an AOT-based algorithm in the setting of physical boundary conditions (see also \cite{JP23, Cibik2025, Cibik2025NA}). While the theoretical results in this work are established in two-dimensional space, owing to the lack of known global regularity for the 3D Navier–Stokes equations, we also complement our analysis with numerical simulations in the three-dimensional periodic domain to explore the behavior of the proposed data assimilation algorithm in that setting.

\section{Notation and Preliminaries}\label{S2}
 Let $\Omega \subset \mathbb{R}^d$, with $d = 2,3$, be a bounded open domain with Lipschitz boundary and volume denoted by $|\Omega|$. For any $p \in [1, \infty]$, we use $c$ to denote a generic constant independent of problem parameters, and $c_\Omega$ to indicate a constant that may depend on the geometry of the domain.

The Lebesgue space $L^p(\Omega)$ consists of all measurable functions $\bfv$ such that
\begin{align*}
\|\bfv\|_{L^p} & =\big( \int_{\Omega} |\bfv (\bfx)| ^p\, d\bfx\big)^{\frac{1}{p}} \, < \infty,  \hspace{1cm} \text{if} \hspace{0.2cm} p \in [1 , \infty),\\
\|\bfv\|_{L^{\infty}}& = \sup_{\bfx \in \Omega}  |\bfv (\bfx)| < \infty,  \hspace{2.3cm} \text{if} \hspace{0.2cm} p  = \infty.
\end{align*}
In particular, the $L^2$ norm and inner product are denoted by $\|\cdot\|$ and $(\cdot, \cdot)$, respectively.

Let $\bfV$ be a Banach space of functions defined on $\Omega$, equipped with the norm $\|\cdot\|_\bfV$. For $p \in [1, \infty]$, we define the Bochner space $L^p(a, b; \bfV)$ as the set of functions $\bfv : (a, b) \rightarrow \bfV$ for which the following norm is finite:
\begin{align*}
\|\bfv\|_{L^p(a, b; B)} &= \left( \int_a^b \|\bfv(t)\|_B^p \, dt \right)^{1/p} < \infty, \qquad \text{for } p \in [1, \infty), \\
\|\bfv\|_{L^\infty(a, b; B)} & = \sup_{t \in (a, b)} \|\bfv(t)\|_B < \infty, \qquad \text{for } p = \infty.
\end{align*}

The space  $W_0^{1,p}(\Omega)$ consists of all functions in
$W^{1,p}(\Omega)$ that vanish on the boundary $\partial \Omega$ (in the sense of traces)
$$ W_0^{1,p}(\Omega) = \big\{ \bfv:   \hspace{0.1cm}\bfv \in  W^{1,p}(\Omega)  \hspace{0.3cm} \text{and} \hspace{0.3cm}\bfv |_{\partial \Omega}=0  \big\}.$$
We introduce the Banach spaces of solenoidal functions
\begin{align*}
L^2_\sigma (\Omega) &= \lbrace \bfv:  \hspace{0.1cm}\bfv \in  L^2(\Omega)\text{,} \hspace{0.3cm} \nabla \cdot \bfv = 0  \hspace{0.3cm} \text{and} \hspace{0.3cm}  \bfv \cdot \bfn |_{\partial \Omega}=0
\rbrace,\\
W_{\sigma}^{1,p}(\Omega) &= \big\{ \bfv:   \hspace{0.1cm}\bfv \in  W^{1,p}(\Omega) \text{,} \hspace{0.3cm} \nabla \cdot \bfv = 0  \hspace{0.3cm} \text{and} \hspace{0.3cm} \bfv |_{\partial \Omega}=0  \big\}, 
\end{align*}
which are equipped with the same norms as $L^2(\Omega)$ and $W^{1,p}_0(\Omega)$, respectively. 
We denote by  $\left( W_\sigma^{1,p}(\Omega) \right)'$ the dual space of $W_{\sigma}^{1,p}(\Omega)$.  We recall the following  inclusions for $p\geq2$
$$
W_{\sigma}^{1,p}(\Omega) \subset L_\sigma^2(\Omega) \subset  \left( W_\sigma^{1,p}(\Omega) \right)'  \hspace{0.3cm} \text{if} \hspace{0.3cm} \frac{2d}{d+2} \leq p < \infty,
$$
where these injections are continuous, dense and   compact. 
For matrix $A = (a_{ij})_{i,j=1}^3$, the Frobenius norm of the matrix $A$ is given by
$$
|A|_F = \left( \sum_{i,j =1}^3 (a_{ij})^2\right)^{\frac{1}{2}}=\left( A:A \right)^\frac12 . 
$$

\subsection*{Inequalities in Banach and Hilbert Spaces}

We recall several standard inequalities in Banach and Hilbert spaces that will be used throughout this work; proofs and further discussion can be found in classical references (see, e.g., \cite{Brezis2011, FMRT01}). 

Let $1 \leq p \leq \infty$ and define the conjugate exponent $p'$ via the relation $\frac{1}{p} + \frac{1}{p'} = 1$. Suppose that $f \in L^p(\Omega)$ and $g \in L^{p'}(\Omega)$. Then the Hölder inequality states:
\begin{equation}\label{Holder}
 \tag{H\"older inequality}
\|fg\|_{L^1} \leq \|f\|_{L^p} \, \|g\|_{L^{p'}}.
\end{equation} 

For any non-negative real numbers $a, b \geq 0$ and any $\lambda > 0$, the Young inequality gives the estimate:
\begin{equation}\label{Young}
 \tag{Young inequality}
ab \leq \lambda \, a^p + \left( p \lambda \right)^{-\frac{p'}{p}} \frac{1}{p'} \, b^{p'}.
\end{equation}

Let $1 < p < \infty$. Then there exists a constant $c_{\text{P}} > 0$ depending only on $p$ and the domain $\Omega$, such that for all $\bff \in W^{1,p}_0(\Omega)$, the Poincaré inequality holds:
\begin{equation}\label{Poincare}
 \tag{Poincar\'e inequality}
\| \bff \|_{L^p} \leq c_{\text{P}} \| \nabla \bff \|_{L^p}.
\end{equation}

Next, we recall the interpolation inequality for Lebesgue spaces. If $\bff \in L^p(\Omega) \cap L^q(\Omega)$ for $1 \leq p, q \leq \infty$, then for any $r$ satisfying
\[
\frac{1}{r} = \frac{\theta}{p} + \frac{1 - \theta}{q}, \quad \text{for } \theta \in [0,1],
\]
it follows that $\bff \in L^r(\Omega)$ and the following inequality holds:
\begin{equation}\label{Interpolation}
\tag{Lebesgue interpolation inequality}
\| \bff \|_{L^r} \leq \| \bff \|_{L^p}^{\theta} \, \| \bff \|_{L^q}^{1 - \theta}.
\end{equation}

For functions in Sobolev spaces, the Ladyzhenskaya inequality provides the bound:
\begin{equation}\label{Ladyzhenskaya}
 \tag{Ladyzhenskaya’s inequality}
\| \bff \|_{L^4} \leq C_L \| \bff \|^{1/2} \, \| \nabla \bff \|^{1/2}, \qquad \text{for } d = 2,
\end{equation}
valid for all $\bff \in W_0^{1,2}(\Omega)$, with a constant $C_L$ depending only on $\Omega$.

Finally, for functions $\bff \in H^2(\Omega) \cap H_0^1(\Omega)$ in two space dimensions, Agmon's inequality provides the following estimate:
\begin{equation}
\label{Agmon}
\tag{Agmon's inequality}
\|\bff\|_{L^\infty} \leq C \| \bff \|_{L^2}^{1/2} \, \| \nabla \bff \|_{H^2}^{1/2}, \qquad \text{for } d = 2,
\end{equation}
where the constant $C$ depends on the domain.

We now recall several standard estimates for solutions $\bfu$ of the Navier--Stokes system \eqref{NSE}, which will serve as a foundational tool in our forthcoming analysis. These bounds are classical and may be found in the literature (see, e.g., \cite{T77, FMRT01}). Let $\lambda_1$ denote the smallest (positive) eigenvalue of the Stokes operator. In what follows, we characterize the complexity of the system using the two-dimensional Grashof number, defined by
\begin{equation}\label{Grashof}
G \coloneqq \frac{1}{\nu^2 \lambda_1} \|\bff\|.
\end{equation}
 \begin{thm}\label{NSEBounds}
Fix \( T > 0 \), and let \( G \) be the Grashof number as defined in \eqref{Grashof}. Suppose that \( \bfu \) is the solution of \eqref{NSE} corresponding to the initial value \( \bfu_0 \). Then, there exists a time \( t_0 \), depending on \( \|\bfu_0\|_{L^2} \), such that for all \( t \geq t_0 \), we have:
\[
\|\bfu(t)\|_{L^2}^2 \leq 2 \nu^2 G^2,
\]
and
\[
\int_t^{t+T} \|\nabla \bfu(\tau)\|_{L^2}^2 \, d\tau \leq 2 \left(1 + T \nu \lambda_1\right) \nu G^2.
\]
\end{thm}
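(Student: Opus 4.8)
The plan is to derive these as the standard absorbing-ball estimates for the 2D Navier--Stokes equations, via a single energy inequality followed by Grönwall and time-integration arguments. First I would test \eqref{NSE} (with $\mathbf{T}_{\text{NSE}}(\nabla\bfu)=\nu\nabla\bfu$) against $\bfu$ in $L^2$. The pressure term drops by incompressibility together with the stated boundary conditions, and the nonlinear term vanishes by the usual antisymmetry $((\bfu\cdot\nabla)\bfu,\bfu)=0$ for divergence-free $\bfu$. This yields the energy equality
\begin{equation*}
\tfrac{1}{2}\frac{d}{dt}\|\bfu\|^2 + \nu\|\nabla\bfu\|^2 = (\bff,\bfu).
\end{equation*}

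Next I would estimate the forcing term. Cauchy--Schwarz together with the spectral Poincaré bound $\|\bfu\|\leq\lambda_1^{-1/2}\|\nabla\bfu\|$ (with $\lambda_1$ the smallest eigenvalue of the Stokes operator) gives $(\bff,\bfu)\leq\lambda_1^{-1/2}\|\bff\|\,\|\nabla\bfu\|$, and then \eqref{Young} absorbs the gradient into the dissipation to produce
\begin{equation*}
\frac{d}{dt}\|\bfu\|^2 + \nu\|\nabla\bfu\|^2 \leq \frac{1}{\nu\lambda_1}\|\bff\|^2.
\end{equation*}
Applying Poincaré once more to bound $\nu\lambda_1\|\bfu\|^2\leq\nu\|\nabla\bfu\|^2$ on the left yields the linear differential inequality $\frac{d}{dt}\|\bfu\|^2+\nu\lambda_1\|\bfu\|^2\leq\frac{1}{\nu\lambda_1}\|\bff\|^2$.

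The differential form of Grönwall's inequality then gives
\begin{equation*}
\|\bfu(t)\|^2 \leq \|\bfu_0\|^2\, e^{-\nu\lambda_1 t} + \frac{1}{\nu^2\lambda_1^2}\|\bff\|^2\bigl(1-e^{-\nu\lambda_1 t}\bigr),
\end{equation*}
and since $\frac{1}{\nu^2\lambda_1^2}\|\bff\|^2=\nu^2 G^2$ by the definition \eqref{Grashof}, the transient term decays exponentially. Choosing $t_0$ large enough (depending only on $\|\bfu_0\|$) that $\|\bfu_0\|^2 e^{-\nu\lambda_1 t_0}\leq\nu^2 G^2$ gives the first bound $\|\bfu(t)\|^2\leq2\nu^2 G^2$ for all $t\geq t_0$. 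For the time-averaged gradient bound, I would integrate $\frac{d}{dt}\|\bfu\|^2+\nu\|\nabla\bfu\|^2\leq\frac{1}{\nu\lambda_1}\|\bff\|^2$ over $[t,t+T]$, discard the nonnegative term $\|\bfu(t+T)\|^2$, and insert the already-established bound on $\|\bfu(t)\|^2$, obtaining $\nu\int_t^{t+T}\|\nabla\bfu\|^2\,d\tau\leq 2\nu^2 G^2+T\nu^3\lambda_1 G^2$; dividing by $\nu$ and enlarging the constant yields $\int_t^{t+T}\|\nabla\bfu\|^2\,d\tau\leq2(1+T\nu\lambda_1)\nu G^2$.

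Since this is a classical a priori estimate, there is no deep obstacle; the points requiring care are the rigorous justification of the energy equality (the vanishing of the nonlinear and pressure terms, valid for the regular 2D solutions guaranteed by the cited theory and made rigorous within a Galerkin framework) and the bookkeeping of constants, so that the absorbing time $t_0$ depends only on $\|\bfu_0\|$ and the final numerical factors match the stated bounds.
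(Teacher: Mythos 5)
Your proof is correct and is exactly the classical absorbing-ball argument that the paper itself does not reproduce but instead defers to the standard references (Temam; Foias--Manley--Rosa--Temam): energy equality, Cauchy--Schwarz plus Poincar\'e and Young on the forcing, Gr\"onwall for the $L^2$ bound after the absorbing time $t_0$, and direct time integration of the dissipation inequality for the gradient average. The constant bookkeeping ($\tfrac{1}{\nu^2\lambda_1^2}\|\bff\|^2=\nu^2G^2$ and $\tfrac{1}{\nu\lambda_1}\|\bff\|^2=\nu^3\lambda_1 G^2$) checks out and reproduces the stated factors $2\nu^2G^2$ and $2(1+T\nu\lambda_1)\nu G^2$.
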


\section[\texorpdfstring{Global Well-Posedness of System \eqref{DALady}}{}]%
        {Global Well-Posedness}%
\label{S3}

In this section, we establish the a priori estimates, which are crucial for demonstrating well-posedness of system \eqref{DALady}. This argument can be rigorously justified through a Galerkin approximation, followed by taking limits under  certain compactness conditions, as outlined in the Aubin-Lions Lemma (see, e.g., \cite{T77}). Given the established nature of this method, applicable to both the 2D and 3D Navier-Stokes (NS) equations, as well as other turbulence models such as the Smagorinsky model \cite{SM63} and the Ladyzhynskaya model \cite{Lady68}, we have opted to omit the detailed exposition at this point.

\begin{thm}

Assume that $p \geq \frac{5}{2}$,   $\bff \in L^{\infty} \left((0, \infty),  L^2(\Omega)\right)$. Let $\bfv_0 \in L^2_{\sigma}(\Omega)$ and  $c_0\, \mu h^2 \leq \nu$. The continuous data assimilation equations \eqref{DALady} has a unique global weak solution that satisfies for all $T >0$

\begin{equation}\label{apriori}
 \bfv \in L^{\infty}\left((0, T); L^2_{\sigma}(\Omega)\right) \cap L^p\left((0,T); W^{1,p}_{\sigma}(\Omega)\right) \cap W^{1,p'}\left((0,T); (W^{1,p}_{\sigma}(\Omega))' \right).   
\end{equation}

\end{thm}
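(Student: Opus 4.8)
The plan is to follow the standard a priori estimate / Galerkin / compactness route, with the two nonlinearities—the convective term and the $p$-Laplacian—controlled separately. Throughout I would work in a Galerkin basis of $W^{1,p}_\sigma(\Omega)$ (e.g.\ the Stokes eigenfunctions), so that every manipulation below is rigorous at the finite-dimensional level and the bounds are uniform in the discretization parameter; the limit is then taken as indicated in the paragraph preceding the statement.

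First I would derive the basic energy estimate by pairing \eqref{DALady} with $\bfv$. The pressure term drops by solenoidality, and the convective term $((\bfv\cdot\grad)\bfv,\bfv)$ vanishes by the usual antisymmetry, leaving
\[
\tfrac12\tfrac{d}{dt}\|\bfv\|^2 + \nu\|\grad\bfv\|^2 + \bar\nu\|\grad\bfv\|_{L^p}^p = (\bff,\bfv) + \mu\big(I_h\bfu - I_h\bfv,\bfv\big).
\]
I would bound $(\bff,\bfv)$ via \eqref{Young} and \eqref{Poincare}; for the feedback term I write $I_h\bfv=\bfv-(\bfv-I_h\bfv)$, use the approximation bound in \eqref{I_h} together with the hypothesis $c_0\mu h^2\le\nu$ to absorb the resulting $\|\grad\bfv\|^2$ into the viscous term, and control $\mu(I_h\bfu,\bfv)$ through $\|I_h\bfu\|\le c_I\|\bfu\|$ and the Navier--Stokes bounds of Theorem~\ref{NSEBounds}. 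Grönwall's inequality then yields $\bfv\in L^\infty(0,T;L^2_\sigma)$, and integrating the dissipation terms gives $\grad\bfv\in L^2(0,T;L^2)\cap L^p(0,T;L^p)$, hence $\bfv\in L^p(0,T;W^{1,p}_\sigma)$ by \eqref{Poincare}.

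Next I would estimate $\partial_t\bfv$ in $(W^{1,p}_\sigma)'$ by testing \eqref{DALady} against $\bfw\in W^{1,p}_\sigma$ with $\|\bfw\|_{W^{1,p}}\le1$. The $p$-Laplacian term contributes $\bar\nu\|\grad\bfv\|_{L^p}^{p-1}$, which lies in $L^{p'}(0,T)$ precisely because $(p-1)p'=p$ and $\grad\bfv\in L^p(0,T;L^p)$; the viscous, forcing, and feedback terms are handled using $L^p(\Omega)\hookrightarrow L^2(\Omega)$ on the bounded domain together with $p'\le 2$. The one genuinely delicate contribution is the convective term: after integration by parts it is bounded by $\|\bfv\|_{L^{2p'}}^2$, so I must show $\bfv\in L^{2p'}(\Omega\times(0,T))$. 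This follows by interpolating the $L^\infty(0,T;L^2)$ and $L^p(0,T;W^{1,p})$ bounds via \eqref{Interpolation} and Gagliardo--Nirenberg, and the exponent count closes in exactly the range guaranteed by $p\ge\tfrac52$ (the binding case being $d=3$, where the Sobolev exponent of $W^{1,p}$ is critical; for $d=2$ the embedding $W^{1,p}\hookrightarrow L^\infty$ makes it immediate). This gives $\partial_t\bfv\in L^{p'}(0,T;(W^{1,p}_\sigma)')$, completing \eqref{apriori}.

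With uniform bounds in hand, I would pass to the limit in the Galerkin scheme: weak-$*$ limits in $L^\infty(0,T;L^2_\sigma)$ and weak limits in $L^p(0,T;W^{1,p}_\sigma)$, and, via the Aubin--Lions--Simon lemma applied to the compact chain $W^{1,p}_\sigma\hookrightarrow\hookrightarrow L^2_\sigma\hookrightarrow(W^{1,p}_\sigma)'$ and the $\partial_t$ bound, strong convergence in $L^p(0,T;L^2_\sigma)$. The strong convergence handles the convective nonlinearity, while the $p$-Laplacian limit is identified by Minty's monotonicity trick, exploiting the monotonicity of $\bfv\mapsto-\grad\cdot(|\grad\bfv|^{p-2}\grad\bfv)$. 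For uniqueness I would subtract two solutions, test the difference equation with $\bfw=\bfv_1-\bfv_2$, exploit the favorable sign of the monotone $p$-Laplacian difference, treat the feedback term as in the energy estimate, and reduce the convective difference to $((\bfw\cdot\grad)\bfv_1,\bfw)$, controlled by \eqref{Ladyzhenskaya} in $d=2$ and by the retained $p$-Laplacian dissipation together with $p\ge\tfrac52$ in $d=3$; Grönwall then forces $\bfw\equiv0$. I expect the convective term—both its appearance in the $\partial_t$ estimate and its treatment in the uniqueness argument—to be the main obstacle, since this is exactly where the hypothesis $p\ge\tfrac52$ is consumed and where the asymmetry between the NSE data and the $p$-Laplacian model prevents the standard monotonicity machinery from closing the argument on its own.
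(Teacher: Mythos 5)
Your proposal follows essentially the same route as the paper's proof: the identical energy estimate with the nudging term absorbed via $c_0\mu h^2\le\nu$, Gr\"onwall for the $L^\infty(0,T;L^2_\sigma)$ bound, integration of the dissipation for the $L^p(0,T;W^{1,p}_\sigma)$ bound, and the dual-space estimate of $\partial_t\bfv$ in which the convective term is controlled by $\|\bfv\|_{L^{2p'}}^2$ and interpolation (exactly where $p\ge\tfrac52$ is used). Your added detail on the Aubin--Lions/Minty limit passage and the uniqueness argument fills in what the paper dismisses as ``standard arguments,'' but it is not a different approach.
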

\begin{proof}
First define $\bff_\mu = \bff + \mu\, I_h(\bfu)$.  As strong solution $\bfu \in C\left( [0,T], H^1\right)$,  consequently  
$$\|I_h (\bfu)\| \leq \|\bfu - I_h(\bfu)\| + \|\bfu\| \leq \left(c_0\, h + \lambda^{-\frac{1}{2}} \right) \|\nabla \bfu\|$$ 
implies that  $\bff_\mu \in C\left( [0,T], L^2\right)$. This means that there is a constant $M$ depends on $h, \lambda_1,  \|\bfu\|_{H^1}$ and $\|\bff\|$ such that $\|\bff_\mu\|^2<M$ for every $t \in [0,T ]$.  Then take a formal inner-product of  \eqref{DALady} with $\bfv$ and integrating by parts,  we arrive at  

\begin{equation}\label{EnergyEq1}
\frac{1}{2} \partial_t \|\bfv\|^2+ \nu \|\nabla \bfv\|^2 +\bar{\nu} \|\nabla \bfv\|^p_{L^p} = \left( \bff_\mu,  \bfv\right)  - \mu \, \left( I_h(\bfv), \bfv\right).
\end{equation}

The first term on the right side of the above is estimated as 
$$\left( \bff, \bfv\right) \leq \|\bff_\mu\| \|\bfv\| \leq \frac{1}{2\mu} \|\bff_\mu\|^2 + \frac{\mu}{2} \|\bfv\|^2.$$

Using \eqref{I_h},  we next estimate the second them on the right hand side of  \eqref{EnergyEq1} as 

\begin{equation*}
\begin{split}
- \mu \, \left( I_h(\bfv), \bfv\right) = \mu \, \left(\bfv -  I_h(\bfv), \bfv\right) - \mu \|\bfv\|^2  &\leq \frac{\mu}{2} \|\bfv -  I_h(\bfv)\|^2 + \frac{\mu}{2}  \|\bfv\|^2 - \mu \|\bfv\|^2\\
& \leq \frac{c_0 \mu\, h^2}{2} \|\nabla \bfv\|^2 - \frac{\mu}{2}  \|\bfv\|^2. 
\end{split}
\end{equation*}
By hypothesis, the size of  $h$ is chosen to be small enough such that $c_0\, \mu h^2 \leq \nu$.   Therefore

\begin{equation}\label{EnergyEq2}
 \partial_t \|\bfv\|^2+ \nu \|\nabla \bfv\|^2 +\bar{\nu} \|\nabla \bfv\|^p_{L^p} \leq \frac{1}{\mu} \|\bff_\mu\|^2, 
\end{equation}
and consequently
$$
 \partial_t \|\bfv\|^2+ \nu \lambda_1 \| \bfv\|^2 +\bar{\nu} \|\nabla \bfv\|^p_{L^p} \leq \frac{1}{\mu} \|\bff_\mu\|^2.
$$
Therefore, after dropping the $L^p$ term,  Gr\"onwall inequality implies 
$$\|\bfv(t)\|^2 \leq \|\bfv_0\|^2 e^{-\nu \, \lambda_1\, t} + \frac{M}{\mu\, \nu\, \lambda_1} \left( 1 - e^{-\nu \, \lambda_1\, t} \right) \leq K_1,$$
for every $t \in  [0,T)$, where 
$$K_1 = \|\bfv_0\|^2 + \frac{M}{\mu\, \nu\, \lambda_1}.$$
Therefore,
\begin{equation}
 \bfv \in L^{\infty}\left((0, T); L^2(\Omega)\right).   
\end{equation}

Now, integrating \eqref{EnergyEq2} yields, 
\begin{equation}
    \int_0^t \|\nabla \bfv\|^2 d\tau \leq K_2,  \hspace{0.3cm} \mbox{and} \hspace{0.3cm}\int_0^t \|\nabla \bfv\|^p_{L^p} d\tau \leq K_3,
    \end{equation}
for every $t \in  [0,T)$, where 
$$K_1= K_1(\bfv_0, \nu, \bff_\mu, T) \hspace{0.3cm} \mbox{and} \hspace{0.3cm}  K_2= K_2(\bfv_0, \bar{\nu}, \bff_\mu, T).$$

Next we investigate the time derivative $\partial_t \bfv$.  For any test function  $\varphi \in W_\sigma^{1,p}(\Omega)$, 
\begin{equation*}
\begin{split}
\lb \partial_t \bfv, \varphi \rb+ \int_{\Omega}  \bfv \cdot \nabla \bfv \cdot\, \varphi\, d\bfx + \int_{\Omega}  2\nu \, \nabla  \bfv : \nabla   \varphi\, &+ \,  \bar{\nu} \, |\nabla  \bfv|^{p-2}_F \,  \nabla  \bfv : \nabla  \varphi \, d\bfx \\
& =  \int_{\Omega}  \bff_\mu \cdot \varphi \, d\bfx - \mu \int_{\Omega}  I_h \bfv\cdot \varphi \,d\bfx, 
\end{split}
\end{equation*}
for almost all $t \in [0,T]$.  Due to the incompressiblity condition, the nonlinear term can be written as $ \bfv \cdot \nabla \bfv = \nabla \cdot \left( \bfv \otimes  \bfv\right)$. Then, we have
\begin{equation*}
\begin{split}
\left| \lb \partial_t \bfv ,  \varphi\, \rb  \right|  \leq  \left| \int_{\Omega}  \bfv   \otimes \bfv : \nabla \varphi\, d\bfx \right | & + \left| \int_{\Omega}  \nu \, \nabla  \bfv : \nabla \varphi\,  + \,  \bar{\nu} \, |\nabla  \bfv|^{p-2}_F\,\nabla  \bfv : \nabla \varphi \, d\bfx \right|\\
&+ \left|  \int_{\Omega}  \bff_\mu \cdot \varphi \, d\bfx \right| +  \mu  \left|\int_{\Omega}  I_h \bfv\cdot \varphi \,d\bfx \right|.
\end{split}
\end{equation*}
Let $p' = \frac{p}{p-1}$, and note that $p' < p$ for $p\geq \frac{5}{2}$.   Using the \ref{Holder} along with \eqref{I_h} yields
\begin{equation*}
\begin{split}
\left| \lb \partial_t \bfv ,  \varphi\, \rb  \right|  \leq \|\bfv\|_{L^{2p'}}^2\, \|  \nabla \varphi\|_{L^p} + \nu \|  \nabla  \bfv\|_{L^{p'}}\, \| \nabla \varphi \|_{L^p} + \bar{\nu} \|  \nabla  \bfv\|^{p-1}_{L^p} \, \|  \nabla \varphi\|_{L^p} + \| \bff_\mu \|  \, \| \varphi \|  +  \mu c_I\,  \| \bfv \|  \, \| \varphi \|.
\end{split}
\end{equation*}
By taking supremum of the above inequality over all  $\varphi \in W_\sigma^{1,p}(\Omega)$   such that  $\| \varphi\|_{W_\sigma^{1,p}(\Omega)} = 1$,  and using the \ref{Interpolation}, we obtain
\begin{equation*}
\begin{split}
\|\partial_t \bfv\|_{\left(W_\sigma^{1,p}(\Omega)\right)'} &
\leq  \|\bfv\|^2_{L^{2p'}} \, + \nu \|  \nabla  \bfv\|_{L^{p'}}\,  + \bar{\nu} \|  \nabla  \bfv\|^{p-1}_{L^p} \, + C \, \| \bff_\mu \|   + \mu \,  c_I \, C \| \bfv\| \\
& \leq  C\,  \|\bfv\|^{\frac{2p-3}{p}}\,  \| \nabla \bfv\|^{\frac{3}{p}}+ 
\nu \, C \,  \| \nabla \bfv\|_{L^2}  +  \bar{\nu} \, C \|  \nabla \bfv\|^{p-1}_{L^p} \, + C \, \| \bff_\mu \|   + \mu \,  c_I \, C \, \| \bfv\|\\
&  \leq  C \, \|\bfv\|^{\frac{2p-3}{p}}\,  \| \nabla \bfv\|_{L^p}^{\frac{3}{p}} + \nu \, C \,  \| \nabla \bfv\|_{L^p}  +  \bar{\nu} \, C \, \|  \nabla \bfv\|^{p-1}_{L^p} \, + C \, \| \bff_\mu \|   + \mu \,  c_I \, C \, \| \bfv\|,
\end{split}
\end{equation*}
where $C$ only depends on $p$ and $\Omega$.  Hence, 
\begin{equation*}
\begin{split}
& \|\partial_t \bfv\|^{p'}_ {L^{p'} \left( 0 , T ; \,  \left(W_\sigma^{1,p}(\Omega)\right)' \right)} = \int_0^T \|\partial_t \bfv(\tau)\|^{p'}_{\left(W_\sigma^{1,p}(\Omega) \right)'}\, d \tau \\
 & \leq C \int_0^T  \|\bfv(\tau)\|^{\frac{2p-3}{p-1}}\,  \|\nabla\bfv(\tau)\|_{L^p}^{\frac{3}{p-1}} \, d\tau  + \nu^{p'} \, C \int_0^T \|\nabla \bfv (\tau)\|_{L^p}^{\frac{p}{p-1}}\, d \tau +  \bar{\nu}^{p'} C \int_0^T\|\nabla \bfv(\tau)\|_{L^p}^{p} \, d \tau\\
 & \quad + \, C \int_0^T \, \| \bff_\mu(\tau) \|^{p'}\, d\tau + \mu^{p'} \,  c_I^{p'} \, C \int_0^T \| \bfv(\tau)\|^{p'} \,dt\\
 &\leq C \, \|\bfv\|^{\frac{2p-3}{p-1}}_{L^\infty \left(0, T; L^2_\sigma(\Omega)\right)} \, T^{\frac{1}{\alpha}} \,  \|\bfv\|_{L^p\left(0, T; W^{1,p}_\sigma(\Omega)\right)}^{\frac{3}{p-1}}  \, + \,  \nu^{p'} \, C\, T^{\frac{1}{\beta}} \, \|\bfv\|_{L^p\left(0, T; W^{1,p}_\sigma(\Omega)\right)}^{p'} \, + \,  \bar{\nu}^{p'} \,C\, \|\bfv\|_{L^p\left(0, T; W^{1,p}_\sigma(\Omega)\right)}^p\\
 & \quad + \,  C \, T^{\frac{1}{\gamma}} \, \|\bff_\mu\|^{p'}_{L^2\left(0, T; L^2_\sigma(\Omega)\right)}  + \mu^{p'} \,  c_I^{p'} \, C \,  \, T^{\frac{1}{\gamma}} \, \|\bfv\|^{p'}_{L^2\left(0, T; L^2_\sigma(\Omega)\right)}.
\end{split}
\end{equation*}
where $\alpha,  \beta $ and $\gamma$ are the conjugate exponents to $(p-1)p/3, p-1 $  and $2/p'$, respectively, and the constant $C$ depends only on $p$ and $\Omega$. Therefore,  we have
\begin{equation}
\label{c3tilde}
\|\partial_t \bfv\|_ {L^{p'} \left( 0 , T ; \,  \left(W_\sigma^{1,p}(\Omega)\right)' \right)} \leq K_3,
\end{equation}
where 
$$K_3=K_3(\nu, \bar{\nu}, \bff_\mu,  T).$$

With the above estimates established, it is possible to prove uniqueness and also to apply the Galerkin approximation scheme to demonstrate the existence of weak solutions through standard arguments.
\end{proof}

\begin{re}
With the above a priori bounds established in \eqref{apriori}, the well-posedness argument can alternatively be approached using the Schauder fixed-point theorem. We conjecture that, in this case, the small data condition \( c_0 \mu h^2 \leq \nu \) may not be necessary. For further details, we direct interested readers to \cite{Cao_Giorgini_Jolly_Pakzad_2022}.
 
\end{re}

\section{Synchronization}\label{S4}
   
 \begin{thm}\label{DAsynch}
 For $p \geq \frac{5}{2}$, let $\bff \in L^2$ and let $\bfu$ be a strong solution of \eqref{NSE} with with periodic boundary conditions. Let $\bfv$ be the solution to the data assimilation algorithm given by \eqref{DALady}. Then for $\mu$ large enough and $h$ small enough such that  
  $$ \mu \geq 8 \nu \lambda_1 G^2 \hspace{0.5cm} \text{and} \hspace{0.5cm}  2\, \mu \, c_0 \, h^2 \leq \nu, $$
  we have 
  \begin{equation*}
\label{}
 \begin{split}
\|\bfu - \bfv\|^2 \leq  C_{\bfu, \Omega}\, \bar{\nu} (1 - e^{-t}) + \|\bfu(0) - \bfv(0)\|\, e^{-t},
\end{split}
\end{equation*}
where   the constant $C_{\bfu, \Omega}$ depends on $\|\bfu\|_{H^1}, \|\bfu\|_{H^3}$ and size of the domain $|\Omega|$.  In particular, 
 \begin{equation*}
\label{}
 \begin{split}
\|\bfu - \bfv\|^2 \leq  C_{\bfu, \Omega}\, \bar{\nu},
\end{split}
\end{equation*}
as $t \rightarrow \infty$. 
 \end{thm}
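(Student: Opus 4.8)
The plan is to derive the evolution equation for the error $\bfw = \bfu - \bfv$, test it against $\bfw$ in $L^2$, and close a differential inequality of the schematic form $\partial_t\|\bfw\|^2 + (\text{coefficient})\,\|\bfw\|^2 \lesssim \bar{\nu}$ by a Gr\"onwall argument. Subtracting \eqref{DALady} from \eqref{NSE} and using $I_h(\bfu)-I_h(\bfv)=I_h(\bfw)$, the error satisfies
\begin{equation*}
\partial_t\bfw + (\bfu\cdot\nabla)\bfu - (\bfv\cdot\nabla)\bfv - \nu\Delta\bfw + \nabla\cdot\big(\bar{\nu}\,|\nabla\bfv|_F^{p-2}\nabla\bfv\big) + \nabla(p-q) = -\mu\, I_h(\bfw),
\end{equation*}
together with $\nabla\cdot\bfw = 0$. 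Taking the $L^2$ inner product with $\bfw$ over the periodic domain, the pressure gradient drops out, the viscous term gives $\nu\|\nabla\bfw\|^2$, and the advective difference reduces (writing $\bfv=\bfu-\bfw$ and using $\nabla\cdot\bfu=\nabla\cdot\bfw=0$ to annihilate $((\bfu\cdot\nabla)\bfw,\bfw)$ and $((\bfw\cdot\nabla)\bfw,\bfw)$) to the single trilinear term $((\bfw\cdot\nabla)\bfu,\bfw)$.

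First I would dispatch the two standard terms. The advective term is controlled in 2D by Ladyzhenskaya's inequality and Young's inequality,
\begin{equation*}
|((\bfw\cdot\nabla)\bfu,\bfw)| \leq C_L^2\,\|\bfw\|\,\|\nabla\bfw\|\,\|\nabla\bfu\| \leq \tfrac{\nu}{4}\|\nabla\bfw\|^2 + \tfrac{C_L^4}{\nu}\|\nabla\bfu\|^2\,\|\bfw\|^2,
\end{equation*}
and the nudging term is handled exactly as in the well-posedness proof, using \eqref{I_h} and Young:
\begin{equation*}
-\mu\,(I_h\bfw,\bfw) = \mu\,(\bfw - I_h\bfw,\bfw) - \mu\|\bfw\|^2 \leq \tfrac{c_0\mu h^2}{2}\|\nabla\bfw\|^2 - \tfrac{\mu}{2}\|\bfw\|^2.
\end{equation*}
Under the hypothesis $2\mu c_0 h^2 \leq \nu$, the contributions $\tfrac{\nu}{4}\|\nabla\bfw\|^2$ and $\tfrac{c_0\mu h^2}{2}\|\nabla\bfw\|^2$ are each at most $\tfrac{\nu}{4}\|\nabla\bfw\|^2$, so they are absorbed by the viscous dissipation, leaving $\tfrac{\nu}{2}\|\nabla\bfw\|^2$ and the favorable term $-\tfrac{\mu}{2}\|\bfw\|^2$.

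The hard part is the turbulent-viscosity term $-\bar{\nu}\int_\Omega|\nabla\bfv|_F^{p-2}\nabla\bfv:\nabla\bfw\,d\bfx$, which is asymmetric: since no $p$-Laplacian appears in \eqref{NSE}, there is no matching $|\nabla\bfu|_F^{p-2}\nabla\bfu$ with which to form a monotone difference. To manufacture one, I would add and subtract $|\nabla\bfu|_F^{p-2}\nabla\bfu$, writing $\nabla\bfw=\nabla\bfu-\nabla\bfv$ and splitting the term as $\bar{\nu}J_1 + \bar{\nu}J_2$ with
\begin{equation*}
J_1 = \int_\Omega\big(|\nabla\bfv|_F^{p-2}\nabla\bfv - |\nabla\bfu|_F^{p-2}\nabla\bfu\big):(\nabla\bfv-\nabla\bfu)\,d\bfx, \qquad J_2 = -\int_\Omega|\nabla\bfu|_F^{p-2}\nabla\bfu:\nabla\bfw\,d\bfx.
\end{equation*}
For $p\geq 2$ the classical monotonicity inequality for the vector $p$-Laplacian gives $J_1 \geq c_p\|\nabla\bfw\|_{L^p}^p\geq 0$, so $\bar{\nu}J_1$ is a nonnegative term that stays on the left. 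The remainder $J_2$ now depends \emph{only} on $\bfu$; by H\"older (with exponents $p',p$) and Young,
\begin{equation*}
\bar{\nu}|J_2| \leq \bar{\nu}\,\|\nabla\bfu\|_{L^p}^{p-1}\,\|\nabla\bfw\|_{L^p} \leq \tfrac{\bar{\nu}c_p}{2}\|\nabla\bfw\|_{L^p}^p + C\,\bar{\nu}\,\|\nabla\bfu\|_{L^p}^p,
\end{equation*}
and the first piece is absorbed into $\bar{\nu}J_1$. The surviving forcing $C\bar{\nu}\|\nabla\bfu\|_{L^p}^p$ involves only the reference solution; in 2D I would bound it via Agmon's inequality applied to $\nabla\bfu$, using $\|\nabla\bfu\|_{L^p}^p \leq |\Omega|\,\|\nabla\bfu\|_{L^\infty}^p \leq C|\Omega|\big(\|\bfu\|_{H^1}^{1/2}\|\bfu\|_{H^3}^{1/2}\big)^p$, which produces precisely the constant $C_{\bfu,\Omega}$ depending on $\|\bfu\|_{H^1}$, $\|\bfu\|_{H^3}$, and $|\Omega|$.

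Collecting everything and discarding the nonnegative dissipation and $p$-Laplacian terms yields
\begin{equation*}
\partial_t\|\bfw\|^2 + \Big(\mu - \tfrac{2C_L^4}{\nu}\|\nabla\bfu(t)\|^2\Big)\|\bfw\|^2 \leq C_{\bfu,\Omega}\,\bar{\nu}.
\end{equation*}
The final, and second delicate, step is to force the Gr\"onwall coefficient to be positive on average. Here I would invoke Theorem \ref{NSEBounds}: for $t\geq t_0$ the time-averaged enstrophy $\tfrac1T\int_t^{t+T}\|\nabla\bfu\|^2\,d\tau$ is controlled by a fixed multiple of $\nu^2\lambda_1 G^2$, so the hypothesis $\mu\geq 8\nu\lambda_1 G^2$ (which absorbs the constant $C_L^4$ into the ``large enough'' requirement) keeps the effective coefficient bounded below by a positive constant. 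The subtlety is that the coefficient is \emph{pointwise} in $t$ while only a time-averaged bound is available; this is reconciled by the uniform (generalized) Gr\"onwall lemma, which then delivers exponential decay of $\|\bfw(t)\|^2$ down to the asymptotic floor $C_{\bfu,\Omega}\bar{\nu}$, giving the stated estimate and $\|\bfu-\bfv\|^2\lesssim\bar{\nu}$ as $t\to\infty$.
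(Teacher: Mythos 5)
Your proposal is correct, and it shares the overall architecture of the paper's proof: the same error equation for $\bfw=\bfu-\bfv$, the same Ladyzhenskaya/Young treatment of $(\bfw\cdot\nabla\bfu,\bfw)$, the same handling of the nudging term via \eqref{I_h} and the hypothesis $2\mu c_0 h^2\leq\nu$, and the same closing step combining Theorem \ref{NSEBounds} (with $T=1/(\nu\lambda_1)$, $t\geq t_0$, $\mu\geq 8\nu\lambda_1 G^2$) with a uniform Gr\"onwall argument. Where you genuinely diverge is at the heart of the proof, the asymmetric turbulent-viscosity term. The paper explicitly declines to use monotonicity and instead substitutes $\bfv=\bfu-\bfw$ together with the pointwise bound $|\nabla\bfv|_F^{p-2}\leq 2^{p-2}\bigl(|\nabla\bfu|_F^{p-2}+|\nabla\bfw|_F^{p-2}\bigr)$, which produces the coercive piece $2^{p-2}\bar{\nu}\|\nabla\bfw\|_{L^p}^p$ plus three remainder terms (\RN{3}, \RN{4}, \RN{5}), two of which mix $\bfu$ and $\bfw$ and each require separate Agmon/Young estimates. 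You instead add and subtract $|\nabla\bfu|_F^{p-2}\nabla\bfu$ and invoke the strong monotonicity inequality $\bigl(|a|^{p-2}a-|b|^{p-2}b\bigr):(a-b)\geq c_p|a-b|^p$ for $p\geq2$, so that $J_1\geq c_p\|\nabla\bfw\|_{L^p}^p$ supplies the coercivity and only the single remainder $J_2$, depending on $\bfu$ alone, survives. Your route is arguably cleaner: it yields the same $\bar{\nu}\|\nabla\bfw\|_{L^p}^p$ dissipation and the same $C_{\bfu,\Omega}\bar{\nu}$ forcing (via Agmon applied to $\nabla\bfu$, hence the dependence on $\|\bfu\|_{H^1}$, $\|\bfu\|_{H^3}$, $|\Omega|$), while sidestepping the somewhat delicate step in the paper where the scalar bound on $|\nabla\bfv|_F^{p-2}$ is applied inside an integrand that is not sign-definite. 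The paper's decomposition, for its part, avoids needing the monotonicity constant $c_p$ and keeps every remainder in a form directly amenable to $L^\infty$ bounds on $\nabla\bfu$. Both close the same differential inequality, and your explicit remark that the Gr\"onwall coefficient is only controlled in time average (hence the need for the uniform Gr\"onwall lemma) is a point the paper passes over more quickly.
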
\label{L2Convergence}

\begin{proof}
     Subtracting \eqref{NSE} and \eqref{DALady},  and  after employing 
 $$(\bfu \cdot \nabla) \bfu - (\bfv \cdot \nabla) \bfv = (\bfw \cdot \nabla) \bfu  + (\bfv \cdot \nabla) \bfw, $$
 the difference  $\bfw= \bfu - \bfv$  satisfies the following error equation
 \begin{equation*}
 \begin{split}
\partial_t \bfw+   (\bfw \cdot \nabla) \bfu  + (\bfv \cdot \nabla) \bfw  - \nu \Delta \bfw & +  \nabla \cdot \left( \bar{\nu} \, |\nabla \bfv|^{p-2}\, \nabla \bfv\right) +  \nabla (p - q)   = -  \mu\, I_h(\bfw),\\ 
\nabla \cdot \bfw  &= 0. 
\end{split}
\end{equation*}
Multiplying the above by $\bfw$ in $L^2(\Omega) $ space,  after integrating by part and using $\nabla \cdot \bfw=0$,  we arrive at the following energy type of equation
\begin{equation*}
\frac{1}{2} \frac{d}{dt} \|\bfw\|^2 + (\bfw \cdot \nabla \bfu, \bfw)+ \nu \|\nabla \bfw\|^2= -  \mu\,(I_h(\bfw), \bfw) +  \left( \bar{\nu} \, |\nabla \bfv|^{p-2}\, \nabla \bfv,   \nabla \bfw \right). 
\end{equation*}
Since the true equation \eqref{NSE} lacks the p-Laplacian term, we cannot leverage its monotonicity properties. As a result, the above  error equation includes a nonlinear term that depends  on the nudging solution $\bfv$. To proceed, we express $\bfv$ as $\bfv = \bfu - \bfw$, introducing four additional nonlinear p-Laplacian-type terms. Each term requires careful estimation, forming a central aspect of the proof.  In light of $|\nabla \bfv|^{p-2} = |\nabla \bfu - \nabla \bfw|^{p-2} \leq 2^{p-2} \left(|\nabla \bfu|^{p-2} + |\nabla \bfw|^{p-2} \right)$, the non linear p-Laplacian term in the above can be estimated as 

$$  \left(|\nabla \bfv|^{p-2}\, \nabla \bfv,   \nabla \bfw \right) \leq  2^{p-2} \Big[\left(|\nabla \bfu|^{p-2} + |\nabla \bfw|^{p-2} \right) (\nabla \bfu - \nabla \bfw), \nabla \bfw \Big].$$
Thus we obtain 
\begin{equation}
\label{Energyeq}
 \begin{split}
\frac{1}{2} \frac{d}{dt} \|\bfw\|^2 + \nu \|\nabla \bfw\|^2 +  2^{p-2} \, \bar{\nu} \|\nabla \bfw\|_{L^p}^p  & \leq    - (\bfw \cdot \nabla \bfu, \bfw)-  \mu\,(I_h(\bfw), \bfw)\\
&  +  2^{p-2} \bar{\nu}  \left(|\nabla \bfu|^{p-2} \nabla \bfu , \nabla \bfw \right)\\& +  2^{p-2} \bar{\nu}  \left(|\nabla \bfw|^{p-2} \nabla \bfu , \nabla \bfw \right) \\
& -  2^{p-2} \bar{\nu}  \left(|\nabla \bfu|^{p-2} \nabla \bfw , \nabla \bfw \right)\\
& \coloneqq \RN{1} + \RN{2} + \RN{3} + \RN{4} - \RN{5}. 
\end{split}
\end{equation}
Next we have the following estimates on the above terms. 

\subsection*{Term \RN{1}} Using \ref{Holder} along with   \ref{Ladyzhenskaya}  give  
\begin{equation*}
 \begin{split}
|(\bfw \cdot \nabla \bfu, \bfw)| \leq \|\bfw\|_{L^4}^2 \|\nabla \bfu\| \leq  \|\bfw\|  \|\nabla \bfw\|  \|\nabla \bfu\| \leq \frac{1}{\nu} \|\nabla \bfu\|^2 \|\bfw\|^2 + \frac{\nu}{4} \|\nabla \bfw\|^2. 
\end{split}
\end{equation*}

\subsection*{Term \RN{2}}  Applying \ref{Holder} and   \eqref{I_h} and after  using the assumption $2\, \mu \, c_0 \, h^2 \leq \nu$, we obtain 
\begin{equation*}
\begin{split}
-  \mu\,(I_h(\bfw), \bfw) &= \mu (\bfw - I_h(\bfw), \bfw) - \mu \|\bfw\|^2 \leq  \mu \, c_0 \, h \|\nabla \bfw\| \|\bfw\| - \mu \|\bfw\|^2\\
& \leq \frac{1}{2}\,  \mu\,  c_0^2\,  h^2   \|\nabla \bfw\|^2 - \frac{1}{2} \mu  \|\bfw\|^2 \leq  \frac{\nu}{4} \|\nabla \bfw\|^2 - \frac{1}{2} \mu  \|\bfw\|^2. 
\end{split}
\end{equation*}

\subsection*{Term \RN{3}} With $c_{\Omega}$ being a generic constant depending on the size of domain,  using  \ref{Holder} along with  embedding of $L^p$ spaces gives 
\begin{equation*}
\begin{split}
 \bar{\nu}\,  \left(|\nabla \bfu|^{p-2} \nabla \bfu , \nabla \bfw \right) &\leq    \bar{\nu}\, \|\nabla \bfu\|_{L^\infty}^{p-1}\, \|\nabla \bfw\|_{L^1} \leq    c_{\Omega}\,  \bar{\nu}\,  \|\nabla \bfu\|_{L^\infty}^{p-1}\,  \|\nabla \bfw\|_{L^p}\\
 &\leq c_{\Omega}\,    \bar{\nu}\,  \|\nabla \bfu\|_{L^\infty}^p +    \frac{\bar{\nu}}{6} \, \|\nabla \bfw\|_{L^p}^p\\
 &  \leq c_{\Omega}\,    \bar{\nu}\,  \| \bfu\|_{H^1}^p \, \| \bfu\|_{H^3}^p  +    \frac{\bar{\nu}}{6} \, \|\nabla \bfw\|_{L^p}^p.
\end{split}
\end{equation*}
\subsection*{Term \RN{4}} Using \ref{Agmon},  the other term is estimated as 
\begin{equation*}
\begin{split}
\bar{\nu} \, \left(|\nabla \bfw|^{p-2} \nabla \bfu , \nabla \bfw \right)&  \leq \bar{\nu}\,  \| \nabla \bfu\|_{L^\infty}\,  \| \nabla \bfw\|_{L^{p-1}}^{p-1} \leq \bar{\nu}\,  \|\bfu\|_{H^1}^{\frac{1}{2}}\, \|\bfu\|_{H^3}^{\frac{1}{2}}\,   \| \nabla \bfw\|_{L^{p}}^{p-1}\\
& \leq c \, \bar{\nu} \|\bfu\|_{H^1}^{\frac{p}{2}}\, \|\bfu\|_{H^3}^{\frac{p}{2}} + \frac{\bar{\nu} }{6} \| \nabla \bfw\|_{L^{p}}^{p}.
\end{split}
\end{equation*}
\subsection*{Term \RN{5}}  Similarly 
\begin{equation*}
\begin{split}
\bar{\nu}\,  \left(|\nabla \bfu|^{p-2} \nabla \bfw , \nabla \bfw \right) &  \leq \bar{\nu}\,  \| \nabla \bfu\|_{L^\infty}^{p-2}\,  \| \nabla \bfw\|_{L^{2}}^{2} \leq \bar{\nu}\,  \|\bfu\|_{H^1}^{\frac{p-2}{2}}\, \|\bfu\|_{H^3}^{\frac{p-2}{2}}\,   \| \nabla \bfw\|_{L^{p}}^{2}\\
& \leq c \, \bar{\nu} \|\bfu\|_{H^1}^{\frac{p}{2}}\, \|\bfu\|_{H^3}^{\frac{p}{2}} + \frac{\bar{\nu} }{6} \| \nabla \bfw\|_{L^{p}}^{p}.
\end{split}
\end{equation*}

Combining all these above bounds into the terms in \eqref{Energyeq},   we obtain 

\begin{equation}
\label{}
 \begin{split}
\frac{d}{dt} \|\bfw\|^2 + \nu \|\nabla \bfw\|^2+ \bar{\nu}\, \| \nabla \bfw\|_{L^{p}}^{p} \leq C_{\bfu, \Omega}\, \bar{\nu} + \left( \frac{2}{\nu} \|\nabla \bfu\|^2 - \mu \right)\, \|\bfw\|^2, 
\end{split}
\end{equation}
where the constant $C_{\bfu, \Omega}$ depends on $\|\bfu\|_{H^1}, \|\bfu\|_{H^3}$ and size of the domain $|\Omega|$.  Hence
\begin{equation*}
\label{}
 \begin{split}
\frac{d}{dt} \|\bfw\|^2 +  \left( \mu - \frac{2}{\nu} \|\nabla \bfu\|^2\right)\, \|\bfw\|^2 \leq C_{\bfu, \Omega}\, \bar{\nu}.
\end{split}
\end{equation*}
By setting \( T = \frac{1}{\nu \lambda_1} \) and considering \( t \geq t_0 \) in Theorem \ref{NSEBounds}, one can assume \( \mu \geq 8 \nu \lambda_1 G^2 \) to obtain the following estimate, utilizing a standard Gr\"onwall inequality

\begin{equation*}
\label{}
 \begin{split}
\|\bfw\|^2 \leq  C_{\bfu, \Omega}\, \bar{\nu} (1 - e^{-t}) + \|\bfw(0)\|\, e^{-t},
\end{split}
\end{equation*}
for all $t \geq t_0$. 
 \end{proof}

\section{Numerical Illustration}\label{S5}

\subsection{2D Simulations}
Motivated by  the experiment conducted by G.I. Taylor and M.M.A. Couette \cite{F95},  we examine a two-dimensional cross-section of the flow between two rotating cylinders.  We use the first order scheme Algorithm \ref{Algorithm}, while the P2-P1 Taylor-Hood mixed finite elements are utilized for discretization in space \cite{L08}. We focus on the case \( p = 3 \) in \eqref{DALady}, corresponding to the Smagorinsky model~\cite{SM63}, and, more specifically, to its dynamic variant introduced by Germano et al.~\cite{GPMC91}. The algorithm is implemented by using public domain finite element software FreeFEM++ \cite{FreeFEM}.  In what follows,  $H$ stands for the DNS mesh  and $h$ represents a coarse spatial resolution where the observational data are collected, i.e. $H << h$.

\begin{Alg}\label{Algorithm} (First order linearly implicit, Backward-Euler scheme)
  Given body force $\bff \in L^{\infty} ((0, \infty), L^2(\Omega))$,  initial condition $\bfv_0 \in \bfV_H$ , reference solution $\bfu^H_{n+1}$, and  $(\bfv^H_n, q_n^H) \in \bfX^H \times Q^H$,  compute  $(\bfv_{n+1}^H, q_{n+1}^H) \in \bfX^H \times Q^H$ satisfying
\begin{multline}\label{IMEX-FEM}
(\frac{\bfv_{n+1}^{H} - \bfv_{n}^{H}}{\Delta t},\Theta ^{H}) + b(\bfv^H_n,\bfv_{n+1}^{H},\Theta^{H}) + \nu \, (\nabla \bfv_{n+1}^{H},\nabla \Theta^{H}) + (\bar{\nu}\, |\nabla \bfv^H_{n}|\, \nabla \bfv^H_{n+1},   \nabla \Theta^{H})  - (q_{n+1}^{H}, \nabla \cdot \Theta^{H})
\\ =  (\bff_{n+1},\Theta^{H})  - \mu \, (I_h(\bfv^H_{n+1} - \bfu_{n+1}), \Theta^h),
\end{multline}
\begin{equation}\label{incomp}
(\nabla \cdot \bfv_{n+1}^{H},r^{H}) = 0,
\end{equation}
for all $(\Theta^{H},  r^{H}) \in  \bfX^H \times Q^H.$
\end{Alg}
The above Algorithm  leads to a linear problem for a continuous and coercive operator for $\bfv^{n+1}$. Existence of $\bfv^{n+1}$  follows from the Lax-Milgram lemma.  Choose the  annulus  domain with  a disk obstacle inside it, see Figure \ref{Domain},  
$$\Omega = \{ (x,y) \in \mathbb{R}^2: x^2 + y^2 \leq 1^2 \hspace{0.3cm} \text{and}  \hspace{0.3cm} x^2 + y^2 \geq (0.1)^2 \}.$$
\begin{figure}[!ht]
  \centering
  \begin{subfigure}[b]{0.6\linewidth}
    \includegraphics[width=\linewidth]{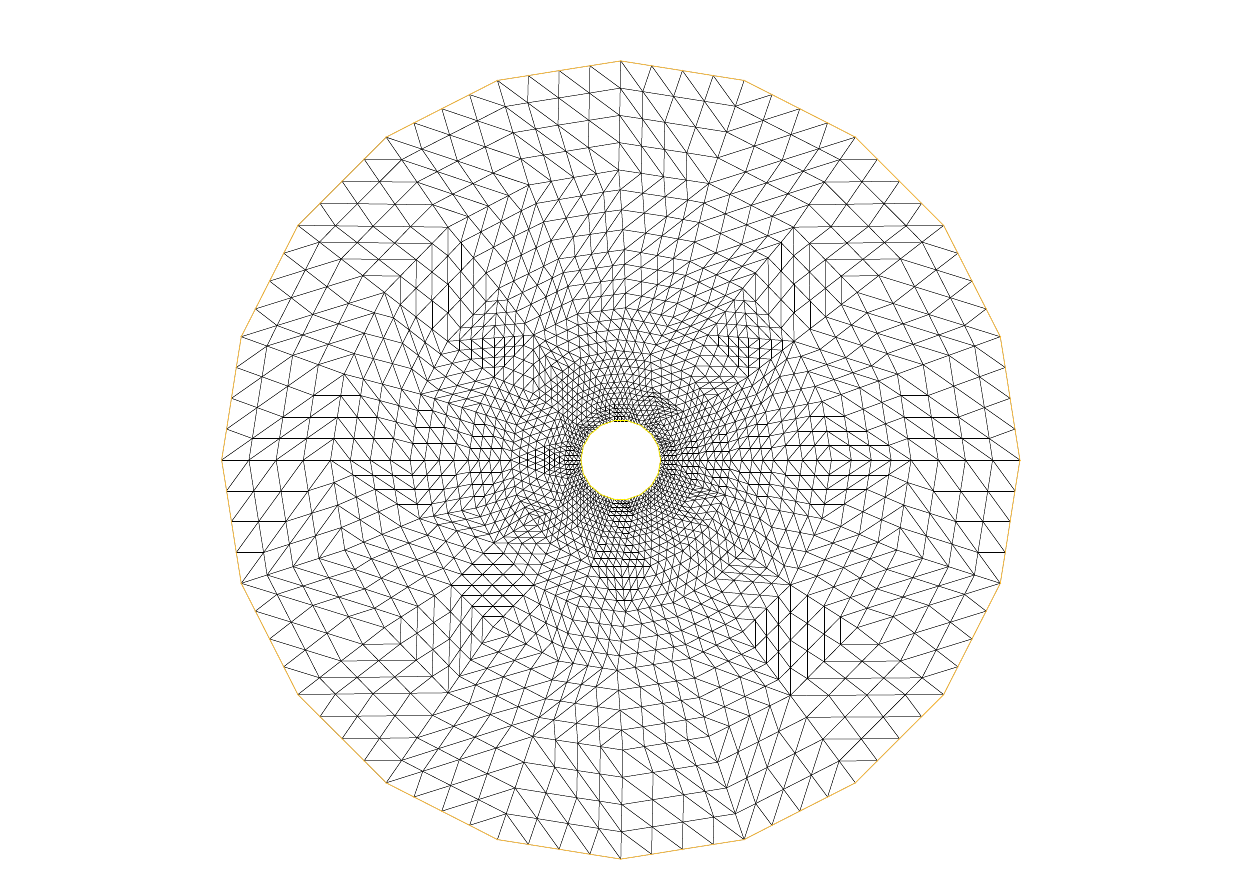}
  \end{subfigure}
 \caption{Domain $\Omega$, and the corresponding coarse spatial resolution $h$ where the data are collected. }
   \label{Domain}
   \end{figure}
 We compute the problem on a Delaunay-Vornoi generated triangular mesh. The number of degrees of freedom is 9432,  the shortest triangular edge is  0.00644635, and the longest edge is 0.0481415.   In the absence of the internal body force,  the rotational force at the outer circle drives the flow, while the inner circle is subject to no-slip boundary conditions.    We take  final time $T = 100$, time step $\Delta t = 0.01$ and Reynolds number $\Rey= 600$.   For the interpolant, we use linear  interpolation
on a mesh that is one refinement coarser, i.e.  on the Delaunay mesh without the barycenter
refinement shown in Figure \ref{Domain}.   As suggested in \cite{SM63}, the parameter  $\bar{\nu}$ is chosen  from dimensional considerations to be 
$$\bar{\nu} = (C_s\, \delta)^2,$$
where   $C_s=0.1$ \cite[Page 192]{L08}  is the standard model parameter, and  $\delta$ is the turbulence-resolution length scale (associated with the mesh size), which, as suggested in the Germano dynamic model \cite{GPMC91}, varies dynamically within the domain, $\delta = H(x,y)$. 

Using Algorithm \ref{Algorithm} with an arbitrary initial condition, we compute the approximating solution $\bfv^H$.   As a true solution is not available for this problem, we utilize the DNS solution as our reference solution, namely $\bfu^H$. At the time step $t_{n+1}$, our observations of the system at a coarse spatial resolution are denoted by $I_h (\bfu_{n+1}^H)$. It should be noted that at this time, we have already obtained the necessary data about the true solution. Therefore, we interpret $I_h(\bfu_{n+1}^H)$ as the most recent available data.

Before presenting the main results, we begin by showcasing numerical illustrations of  both nudged Navier-Stokes and Ladyzhynskaya equations using observational data from the  corresponding models.   Figure \ref{SameData} displays the time evolution of $L^2$ norms for the velocity difference between the reference and nudged solutions.   It demonstrates that both models achieve synchronization up to machine precision, consistent with the results reported in \cite{AOT14} for the NSE and in \cite{Cao_Giorgini_Jolly_Pakzad_2022} for the Ladyzhynskaya model.
\begin{figure}[!ht]
  \centering
  \begin{subfigure}[b]{0.49\linewidth}
    \includegraphics[width=\linewidth]{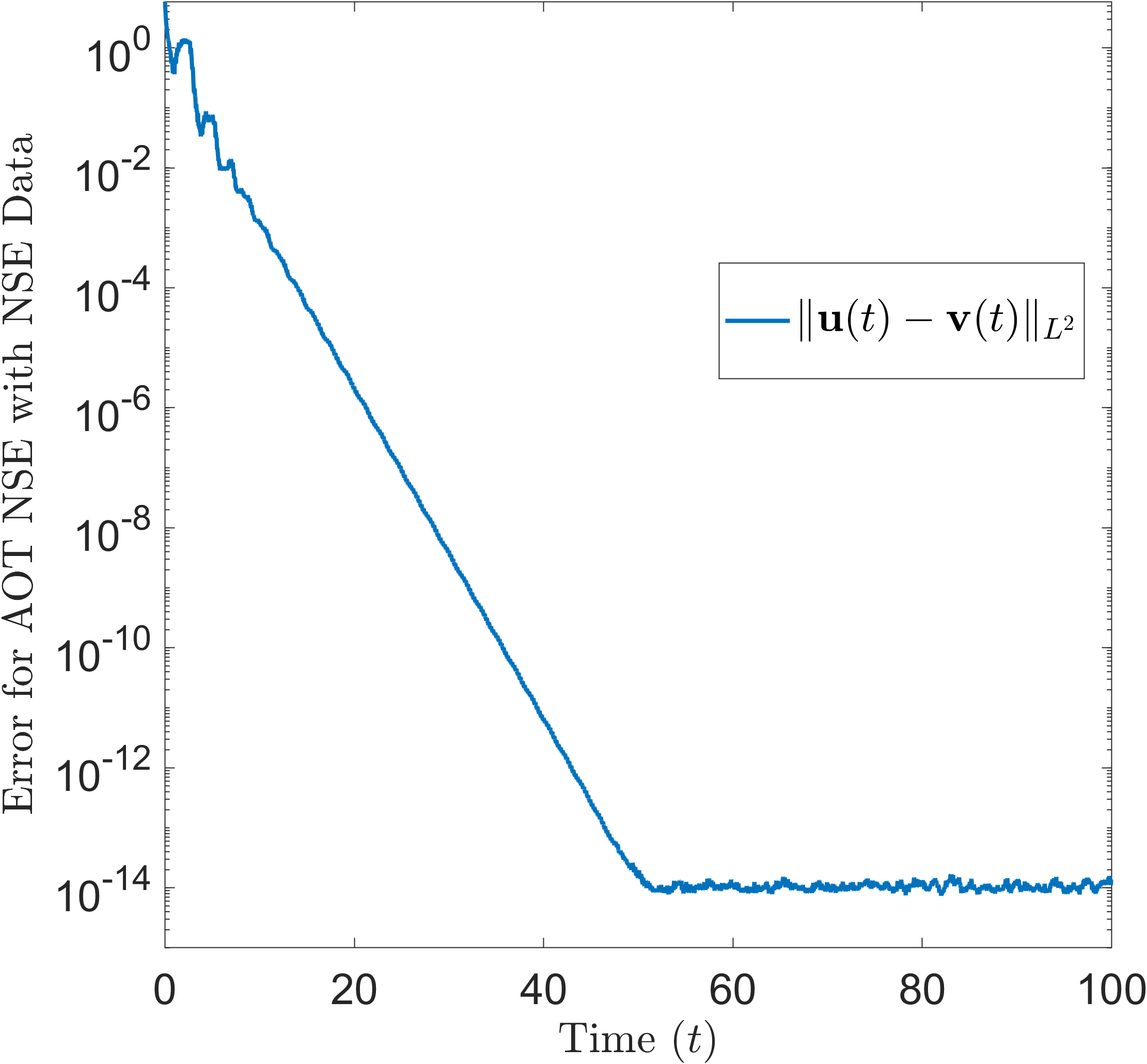}
  \end{subfigure}
 \begin{subfigure}[b]{0.49\linewidth}
    \includegraphics[width=\linewidth]{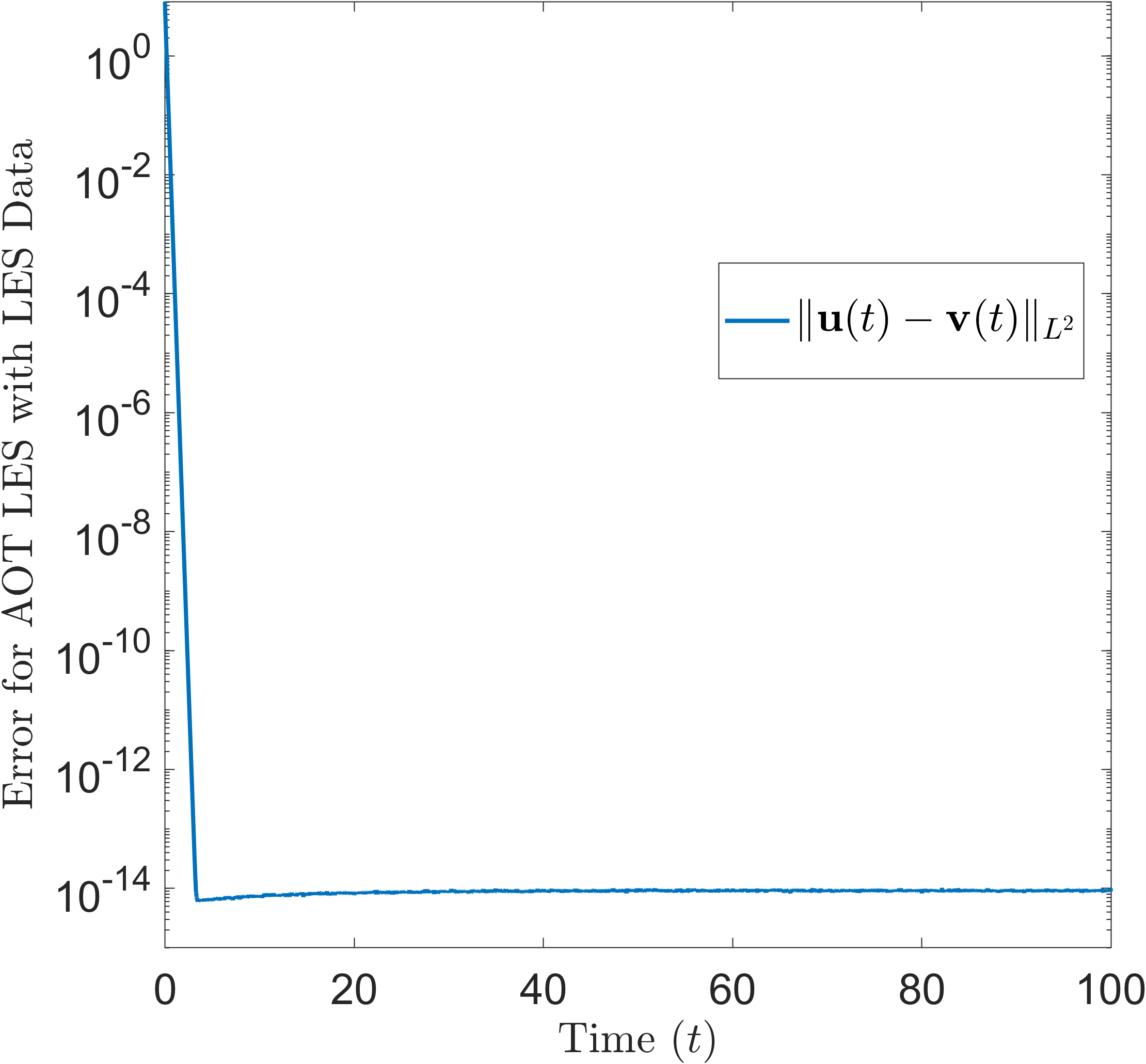}
 \end{subfigure}
 \caption{Left:  $L^2$ error for AOT applied to the NSE  with observed data from the NSE. Right:  $L^2$ error for AOT applied to the Ladyzhenskaya model with observed data from the Ladyzhenskaya model.}
   \label{SameData}
   \end{figure}

The DNS is  run from $t=-5$ to $t_0=0$ to generate the initial condition $\bfu_0$.  In order to calculate the nudged solution for equations \eqref{DALady}, we utilize zero initial conditions where the velocity, $\bfv_0= \vec{0}$.  We use a spatial and temporal discretization setup  introduced in Algorithm  \ref{Algorithm} that is identical to that of the DNS, with $\mu=10$, and then we begin the nudging process by incorporating the DNS solution.  The time progression of the $L^2$ norms for the velocity difference between the NSE \eqref{NSE} solution  and nudged Smagorinsky model \eqref{DALady}  is shown in Figure \ref{SMwNSE}  for different scenarios.    In order to confirm the result derived in Theorem \ref{L2Convergence}, we have considered various values (including many that are not physically realistic) for the model parameter $C_s$, which allows us to vary the viscosity of the model according to the relationship $\bar{\nu} = (C_s \delta)^2$.

\begin{figure}[!ht]
  \centering
  \begin{subfigure}[b]{0.49\linewidth}
    \includegraphics[width=\linewidth]{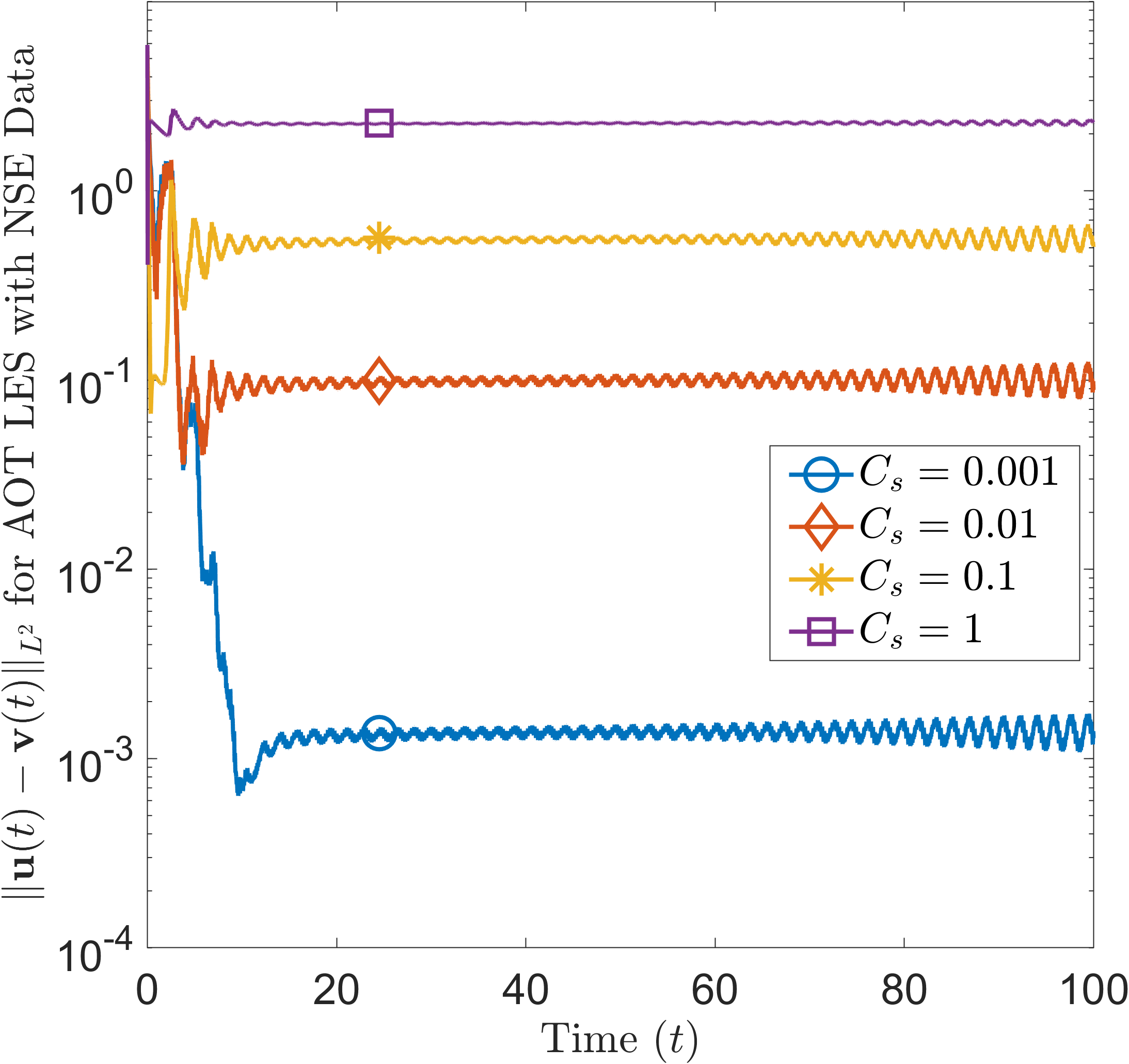}
   \end{subfigure}
 \begin{subfigure}[b]{0.49\linewidth}
    \includegraphics[width=\linewidth]{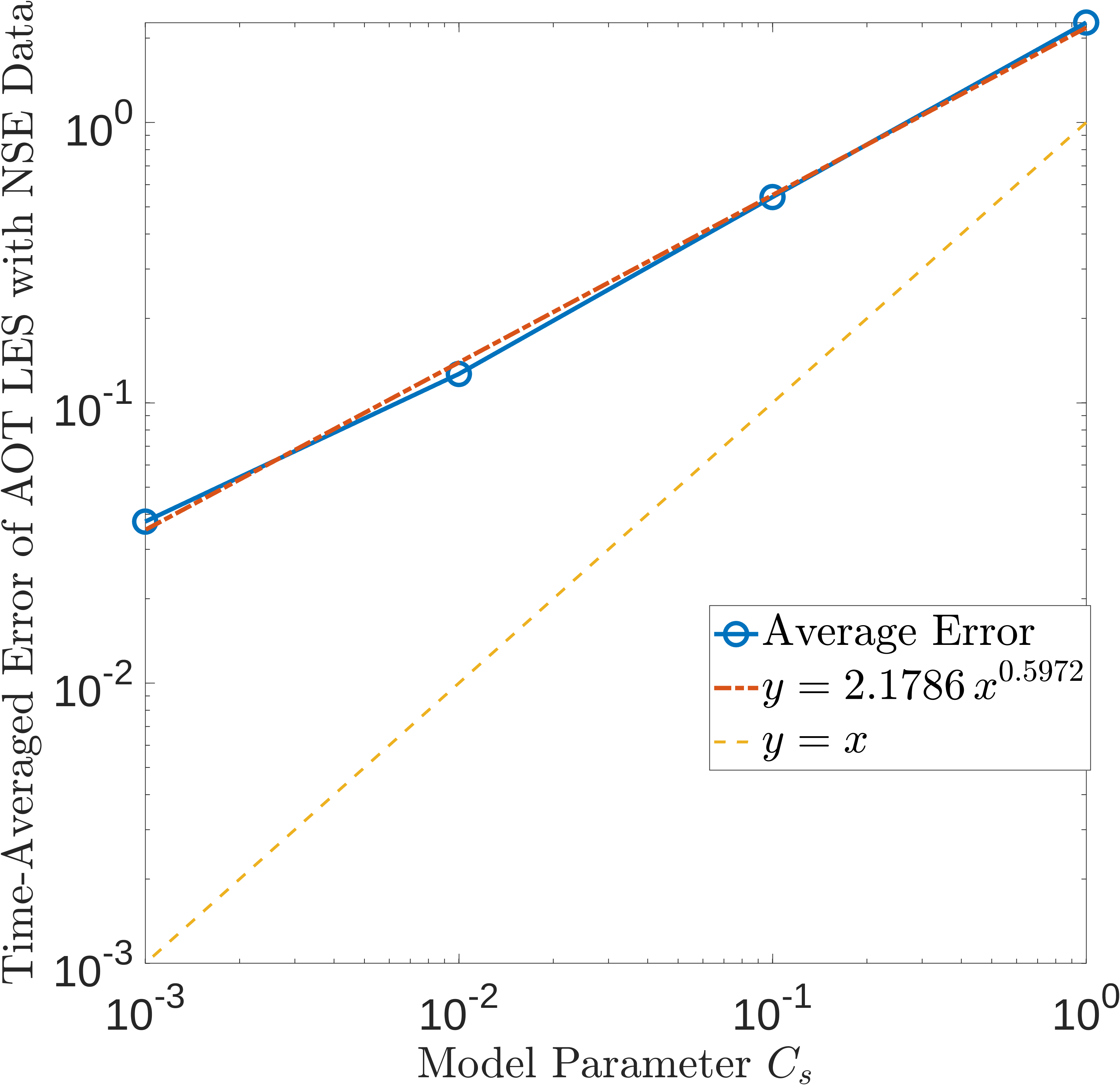}
   \end{subfigure}
 \caption{Left:  $L^2$  error for AOT applied to the Ladyzhenskaya model with observed data from the NSE with different value for the turbulence viscosity parameter $\bar{\nu}= (C_s\delta)^2$. Right: Time-averaged $L^2$ errors versus  the model parameter (least-squares fit included as red $\cdot$- plot for reference).}
   \label{SMwNSE}
\end{figure}

\subsection{3D Simulations}
In this subsection, we present some results from 3D simulations. Note that our analysis was performed in the 2D case rather than the 3D case due to the usual obstructions with the 3D Navier-Stokes equations (although see, e.g., \cite{Biswas_Price_2020_AOT3D} for the AOT algorithm in the 3D case).  However, these analytical obstructions do not prevent us from applying the algorithm in 3D simulations.  First, as a proof of concept, we demonstrate the algorithm's performance in the setting of the 3D Navier-Stokes equations with the Ladyzhenskaya-Smagorinsky LES model.  We note that AOT has been investigated computationally in the 3D case in 
\cite{DiLeoni_Clark_Mazzino_Biferale_2018_inferring,DiLeoni_Clark_Mazzino_Biferale_2019,Larios_Pei_Victor_2023_second_best}.

In this section, we use a fully dealiased (according to the 2/3's dealiasing law) spectral method on a periodic domain $[0,1]^3$. We fix a spatial resolution of $256^3$ grid points. This choice of fairly low resolution is due to the time complexity of employing the Ladyzhenskaya model with Fourier methods. We consider the case $ p =3$, specifically the Smagorinsky model \cite{SM63}  which is a common turbulence model used in large eddy simulation \cite{J04, P04}. In each simulation, we employ Taylor-Green forcing: 
\begin{empheq}[left=\empheqlbrace]{align*}
    f_1(x,y,z) &= \sin (2 \pi x) \cos(2 \pi y) \cos(2 \pi z)\,,\\
    f_2(x,y,z) &= -\cos (2 \pi x) \sin (2 \pi y) \cos(2 \pi z)\,,\\
    f_3(x,y,z) &= 0\,.
\end{empheq}
We begin each simulation from data attained from a simulation with isotropic randomly distributed initial data which had been run out to time 10. In the case of the observational data, we simulate with a viscosity $\nu=\nu_{\text{fine}} = 2.75\times10^{-3}$, for which the energy spectrum is resolved up to machine precision. For some simulations, detailed below, we also considered a larger viscosity; namely $\nu_{\text{coarse}} = 2.75\times10^{-2}$.
We used a variable time stepper based on a CFL criterion.  The minimum time step, minimizing across all runs, was $\Delta t\approx 1.09896\times10^{-6}$.
We choose $C_s = 0.17$  as a standard model parameter for the DA solution (cf. \cite{Pope_2000_bible} c. 13). This yields that $\bar{\nu} = (C_s \delta)^2 = (C_s/256)^2 \approx 4.41 \times 10^{-7}$. For initial data for the DA solution, we use the standard initial data of $v_0 \equiv 0$. For $I_h$, we use a Fourier truncation, for which $c_0 = 1$. We observe Fourier modes of wave number $\mathbf{k}$ with $|\mathbf{k}| \leq 9$ (so $h = 1/9$) or less (at the end of the inertial range), yielding 2968 wave modes observed ($\approx$0.49\% of all active wave modes, accounting for symmetry, the 2/3's dealiasing, and the mean-zero condition). We use $\mu = 30$ which was chosen for convenience with no attempt to optimize the choice. Larger choices of $\mu$ are possible but we do not explore this here.

Since we are using a different code base, as a validation check we begin by corroborating the results of \cite{Cao_Giorgini_Jolly_Pakzad_2022}, where we consider observational data from the Ladyzhenskaya model being fed into the Ladyzhenskaya model. To this end, in Figure \ref{figLESLES}, we see that both relative $L^2$ error and relative $H^1$ error reach and stabilize near machine precision, both converging exponentially fast. 
\begin{figure}
    \centering
    \includegraphics[width=0.5\textwidth]{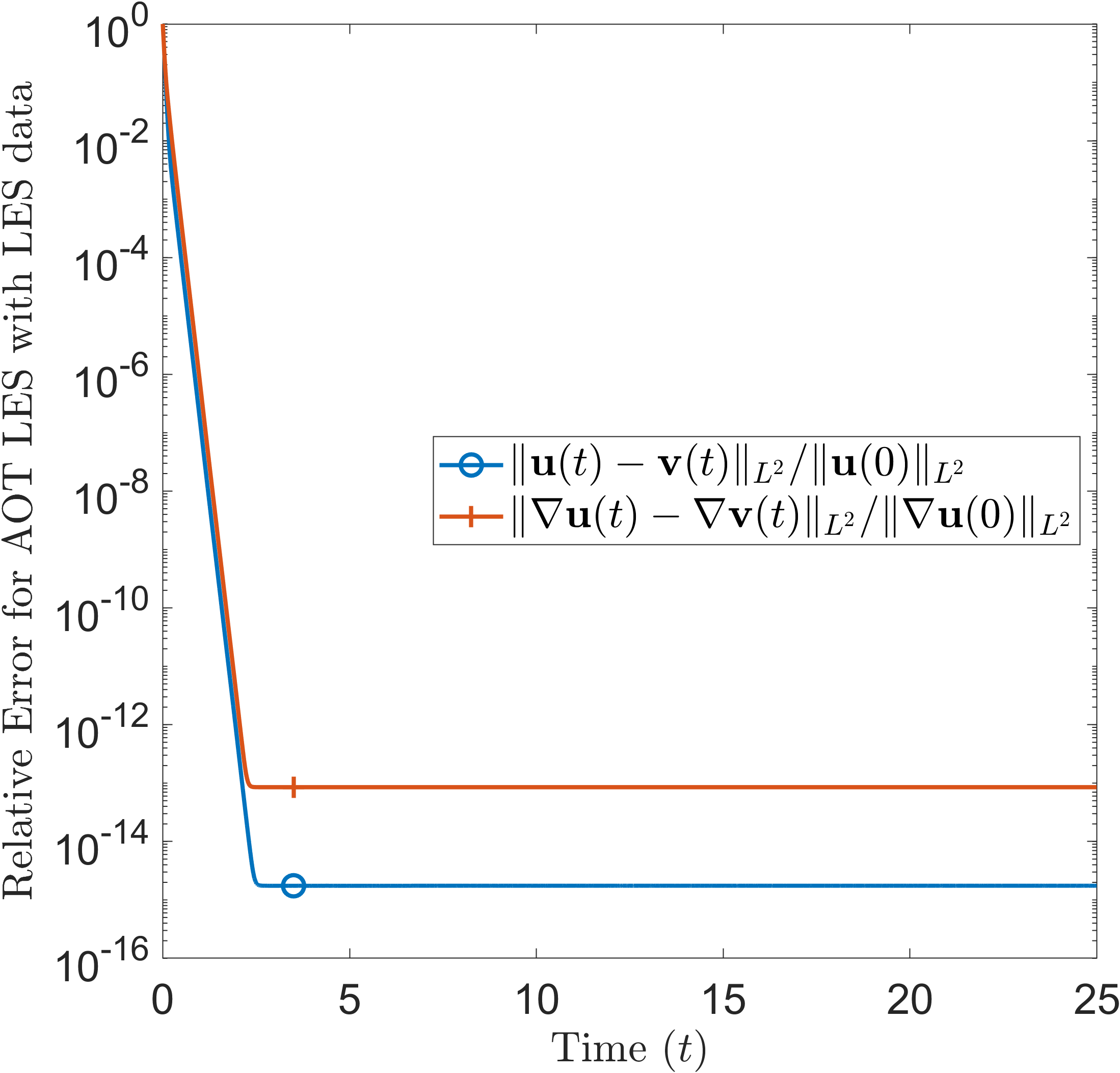}
    \caption{Shows relative $L^2$ and $H^1$ error for AOT applied to the Ladyzhenskaya model with observed data from the Ladyzhenskaya model.}
    \label{figLESLES}
\end{figure}

Next, in Figure \ref{fig:3Ddiffvisc} in blue we consider the AOT algorithm if one does not have enough resolution, and is instead forced to use a higher viscosity in the approximate solution. In this case, the different viscosity is $\nu = 2.75\times10^{-2}$, which we found to yield a resolved spectrum for a spatial resolution of $64^3$ grid points. In red we consider the model \ref{DALady} with the same viscosity as the natural viscosity. In yellow we consider the \ref{DALady} model with the different viscosity in the DA solution of $\nu = 2.75\times10^{-2}$. Lastly, in purple we show a reference of what happens if you simulate \ref{NSE} and the Ladyzhenskaya model from the same initial data with no data assimilation term and track the error. 

It is clear that all three methods of DA shown in Figure \ref{fig:3Ddiffvisc} present an advantage over simulating the Ladyzhenskaya model on its own. One important note to make is that the lines for AOT with different $\nu$ and AOT LES with different $\nu$ are not identical, having differences between each other of order $10^{-4}$ in both plots, yet they offer similar performance in terms of error. Regarding the simulation of \ref{DALady}, we see that a potential upper bound for $C_{\bfu,\Omega}\bar{\nu}$ is $\approx 10^{-4}$. Consistent with \ref{DAsynch}, we see exponential decay up to a model difference constant, and see the error level off as the high modes do not fully align. A visualization of this disparity is found in Figure \ref{3Dims}. Therein, we see contours of the magnitude of the vorticity of the observed solution (see Figure \ref{fig:3DObs}), the magnitude of the vorticity of the DA solution (see Figure \ref{fig:3DDA}), and the magnitude of the difference of the vorticities of the observed solution and the DA solution (see Figure \ref{fig3DDiff}). We see that relative the errors have magnitude up to $1.3 \times 10^{-3}$ and the errors form in areas of high vorticity. In Figure \ref{fig3DDiffZoom} we can see the jagged appearance of these regions of larger error, indicating a mismatch in  high wave modes.
\begin{figure}
    \centering
    \begin{subfigure}[t]{0.47\textwidth}
        \includegraphics[width=1\textwidth]{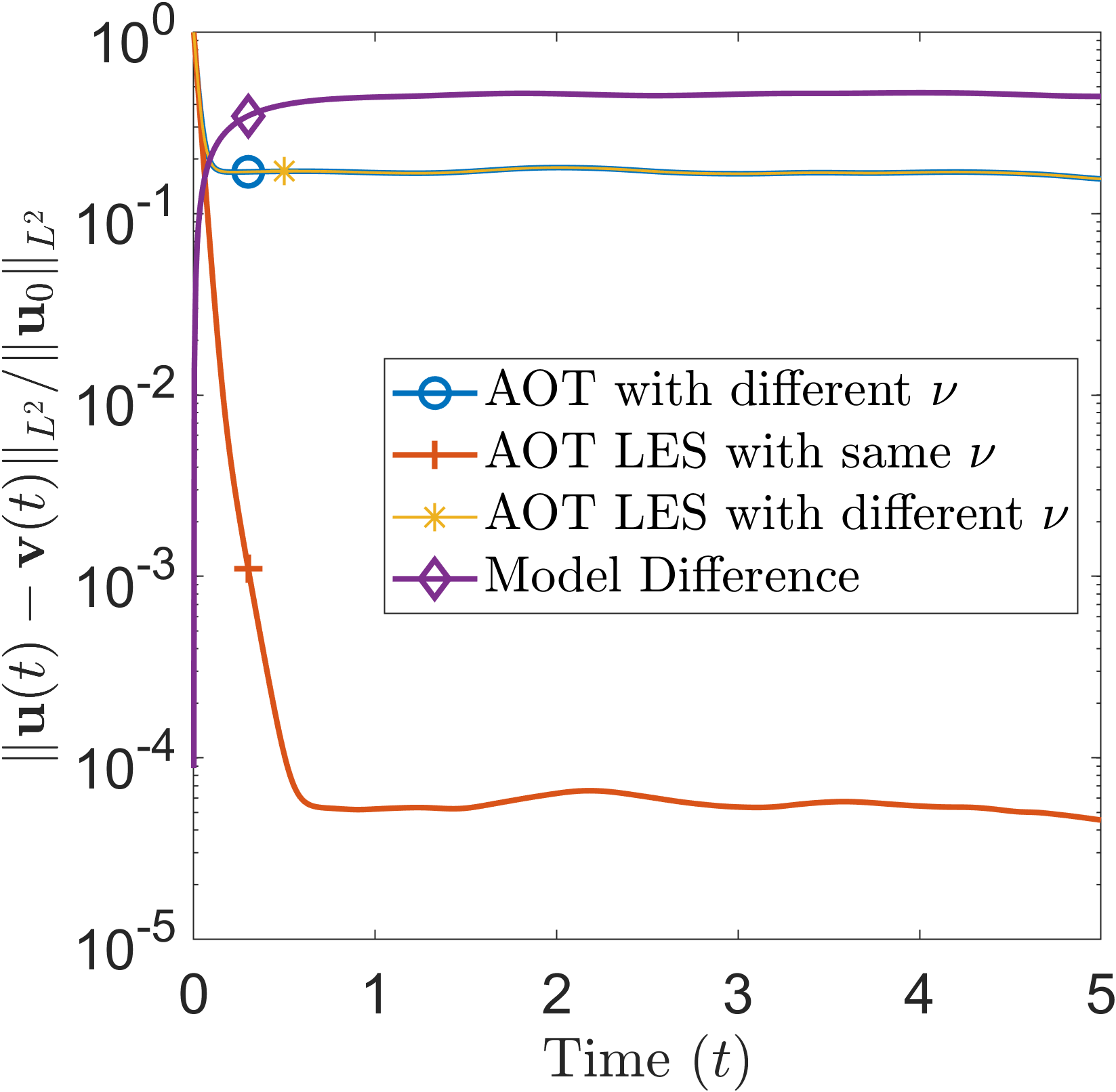}
        \caption[b]{\label{fig:3DL2Err}Relative $L^2$ error}
    \end{subfigure}
    \hspace{2mm}
    \begin{subfigure}[t]{0.47\textwidth}
        \includegraphics[width=1\textwidth]{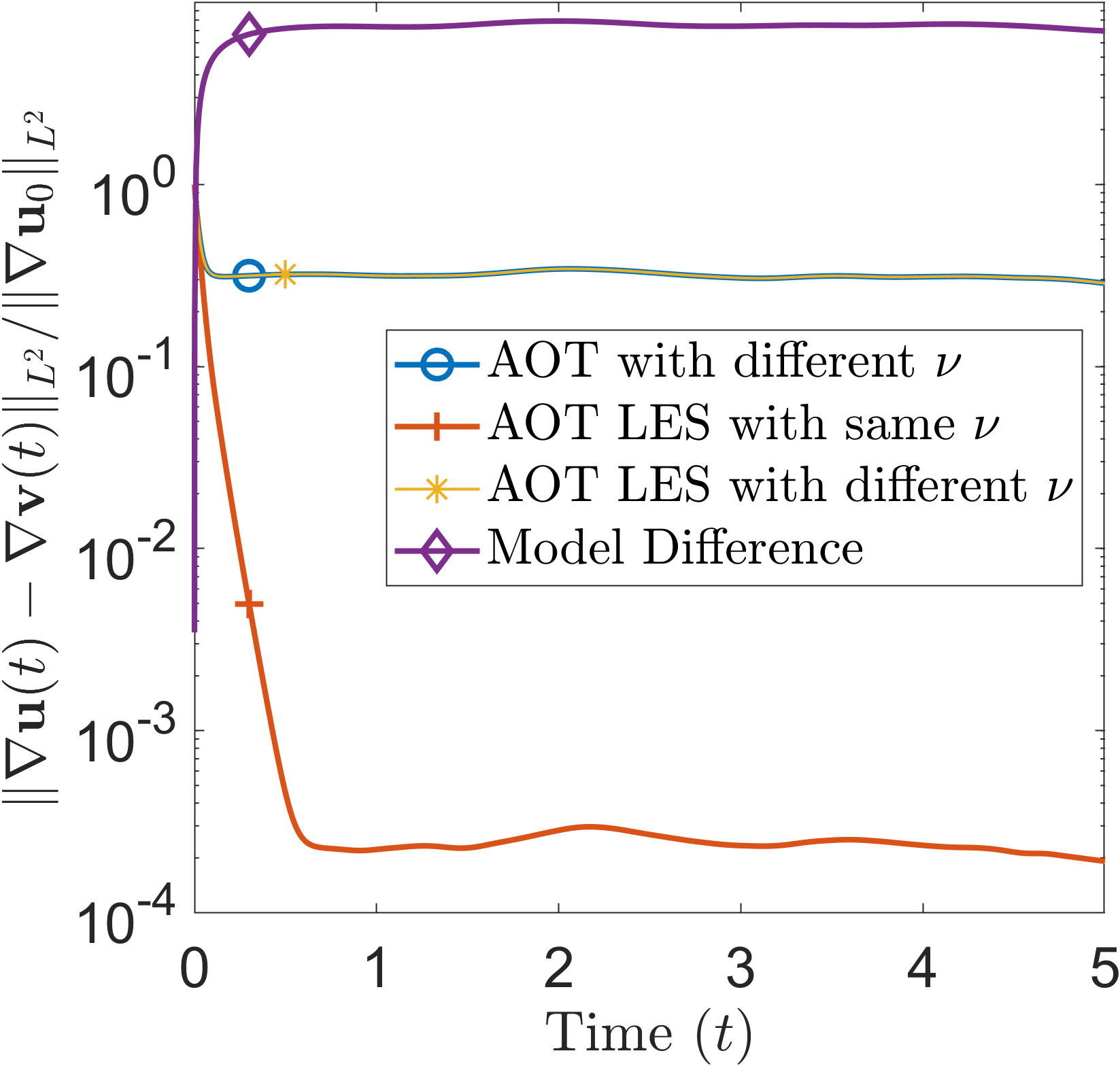}
        \caption[b]{\label{fig:3DH1Err}Relative $H^1$ error }
    \end{subfigure}
    \caption{For observed data coming with viscosity $\nu = 2.75\times10^{-3}$, we show relative error for AOT with a different viscosity $\nu = 2.75\times10^{-2}$, AOT LES with a different viscosity $2.75 \times 10^{-2}$ and the same viscosity $\nu = 2.75\times 10^{-3}$, and the Smagorinsky model with no DA.}
    \label{fig:3Ddiffvisc}
\end{figure}

\begin{figure}
    \centering
    \begin{subfigure}[t]{0.49\textwidth}
        \includegraphics[width=1\textwidth]{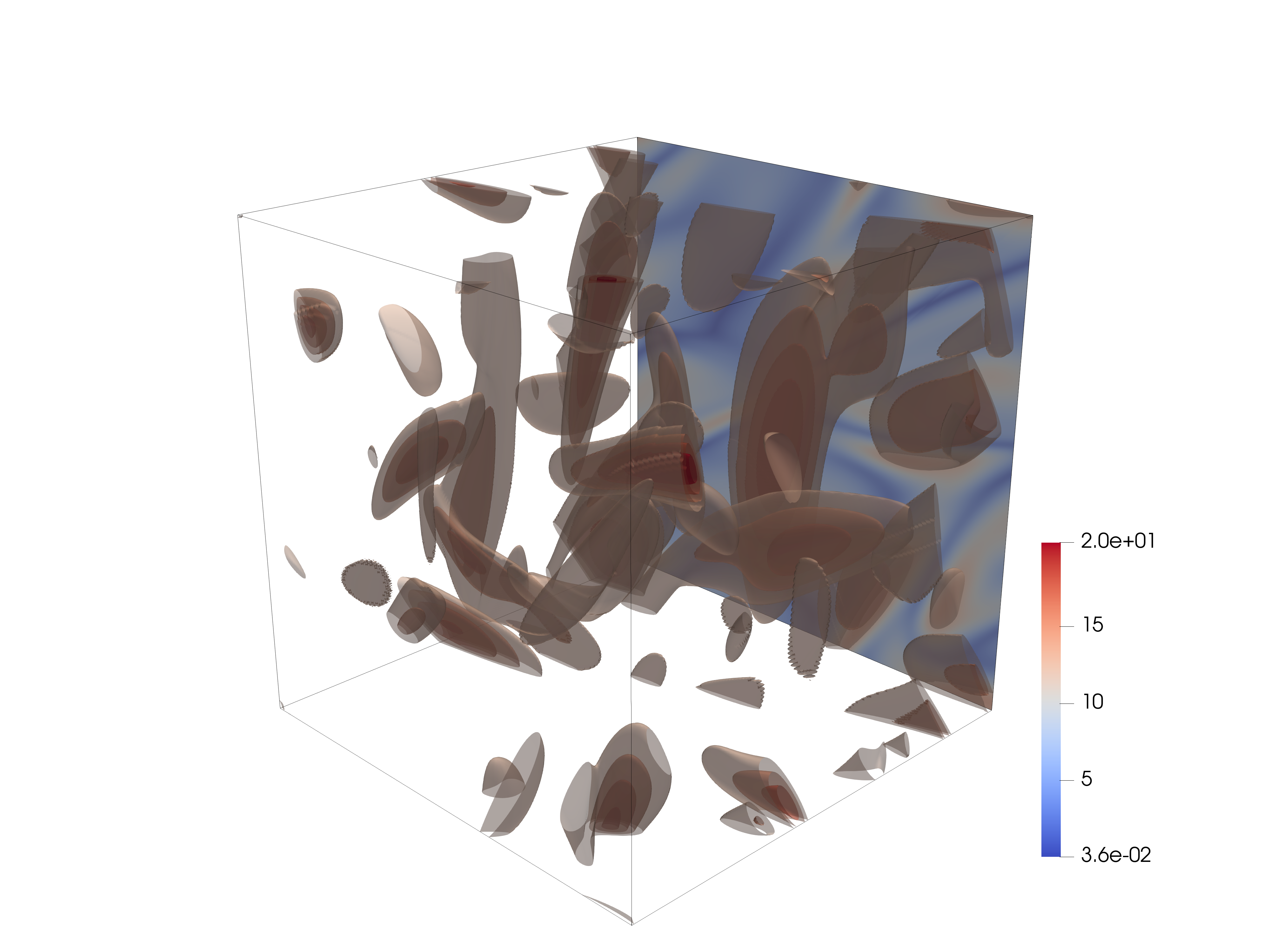}
        \caption[b]{\label{fig:3DObs}Observed Solution}
    \end{subfigure}
    \begin{subfigure}[t]{0.49\textwidth}
        \includegraphics[width=1\textwidth]{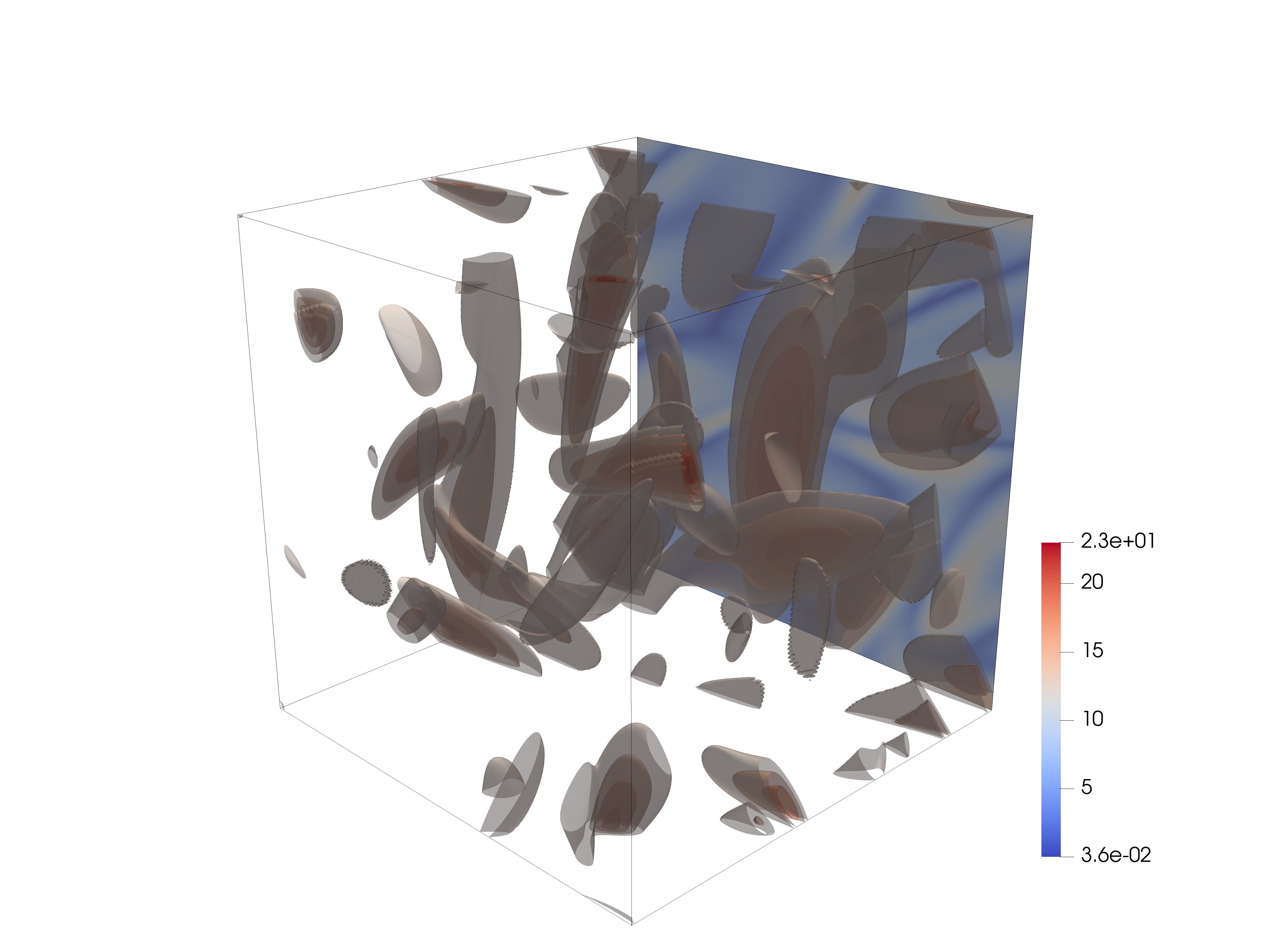}
        \caption[b]{\label{fig:3DDA}DA Solution}
    \end{subfigure}
    \begin{subfigure}[t]{0.49\textwidth}
        \includegraphics[width=1\textwidth]{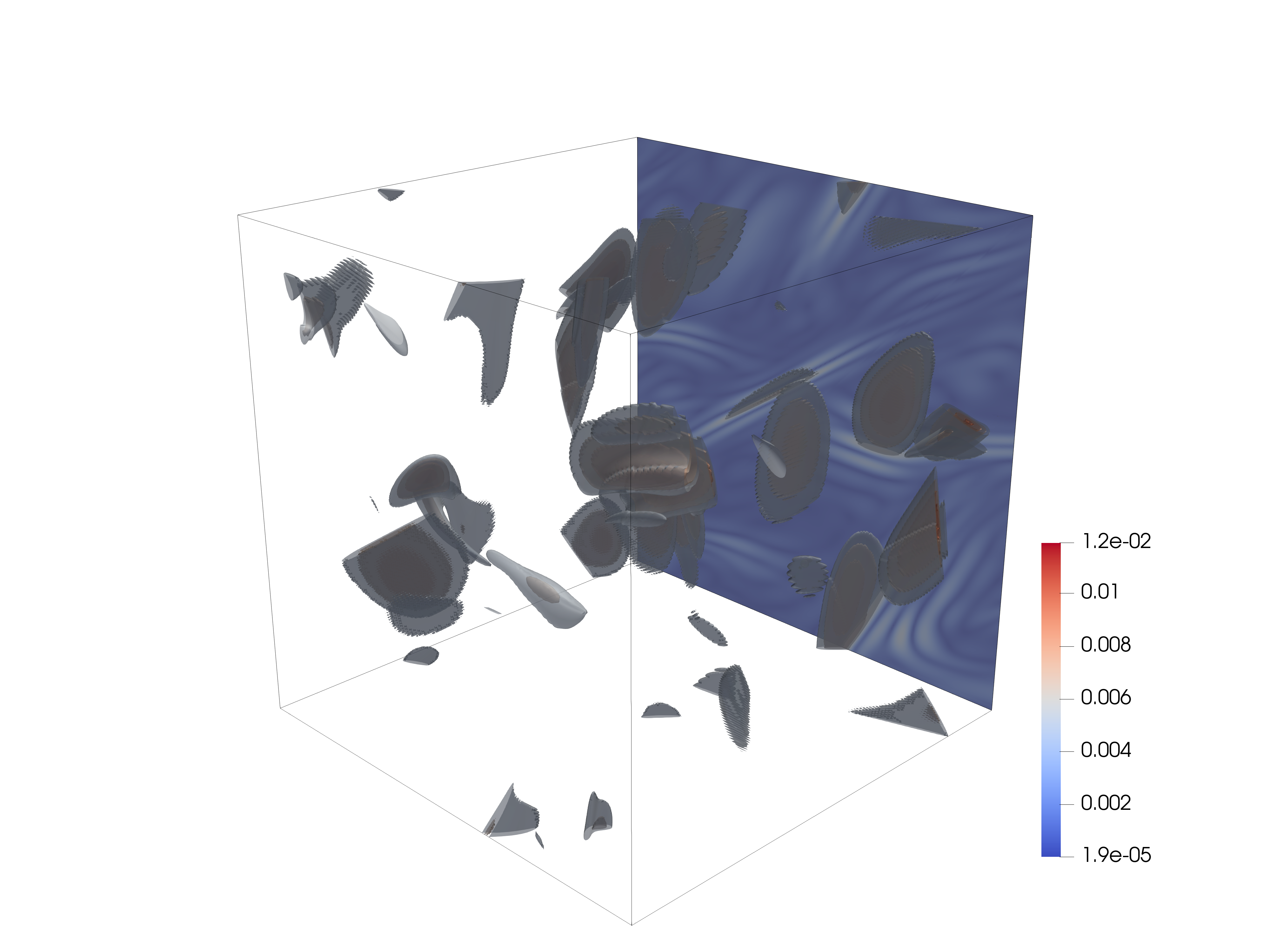}
        \caption[b]{\label{fig3DDiff}Difference}
    \end{subfigure}
            \hfill
    \begin{subfigure}[t]{0.42\textwidth}
        \includegraphics[width=1\textwidth,clip,trim=15mm 0mm 50mm 0mm]{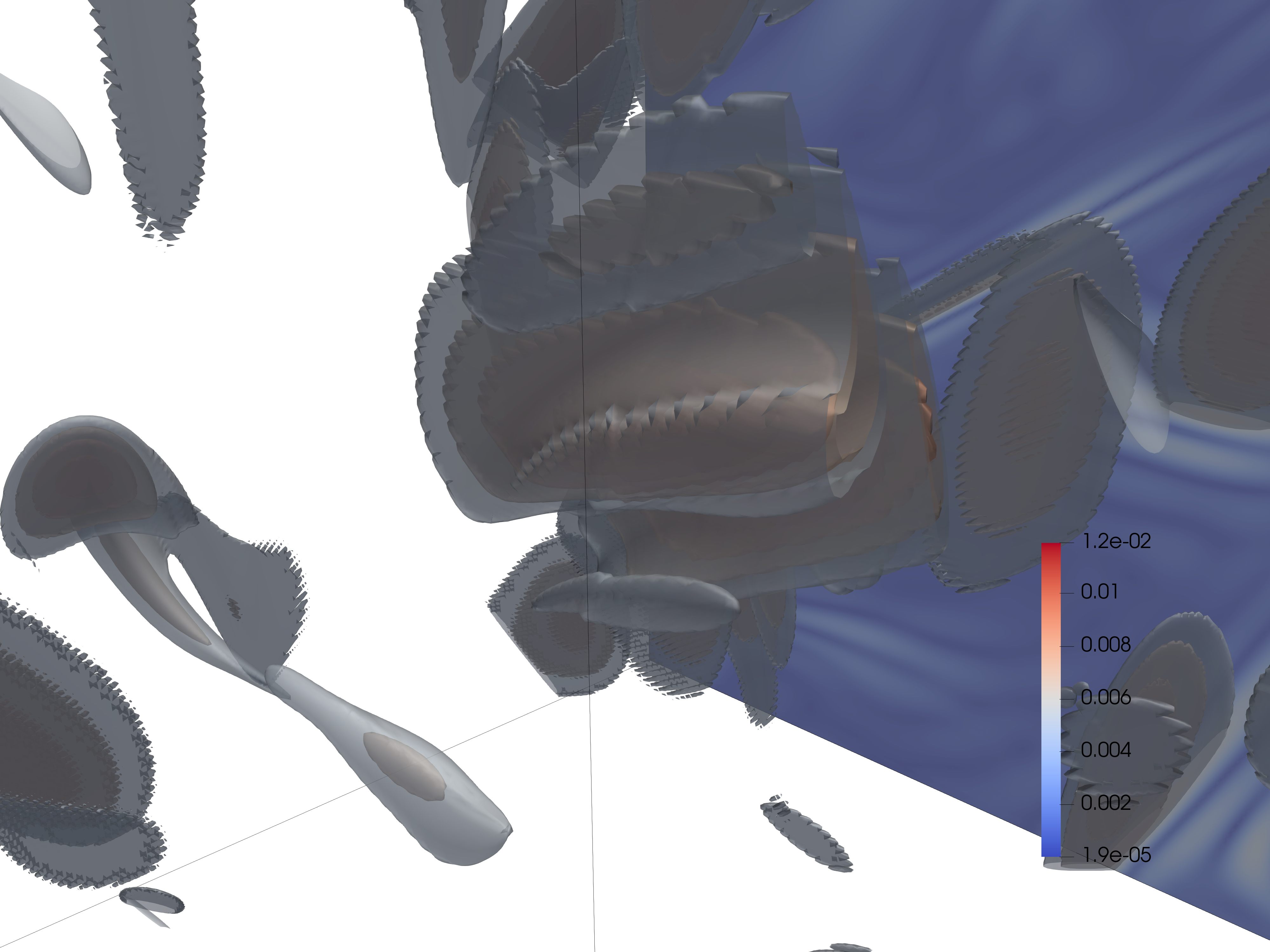}
        \caption[b]{\label{fig3DDiffZoom}Difference Zoomed In}
    \end{subfigure}
    \caption{Shows the magnitude of the vorticity of (A), the observed data, (B) the AOT-LES solution, and (C) the difference of the observed data and the AOT-LES solution. }
    \label{3Dims}
    
\end{figure}

\pagebreak

\begin{bibdiv}
\begin{biblist}
\bib{PeleLM-FDF2022}{article}{
  author={Aitzhan, A.},
  author={Sammak, S.},
  author={Givi, P.},
  author={Nouri, A. G.},
  title={PeleLM–FDF Large Eddy Simulator of Turbulent Combustion},
  journal={Combustion Theory and Modelling},
  volume={27},
  number={1},
  pages={1--18},
  date={2023},
}

\bib{Albanez_Nussenzveig_Lopes_Titi_2016}{article}{
  author={Albanez, Débora A. F.},
  author={Nussenzveig Lopes, Helena J.},
  author={Titi, Edriss S.},
  title={Continuous data assimilation for the three-dimensional Navier--Stokes-$\alpha $ model},
  journal={Asymptotic Anal.},
  volume={97},
  number={1-2},
  date={2016},
  pages={139--164},
  publisher={IOS Press},
}

\bib{Altaf-Downscaling2017}{article}{
  author={Altaf, M. U.},
  author={Titi, E. S.},
  author={Gebrael, T.},
  author={Knio, O. M.},
  author={Zhao, L.},
  author={McCabe, M. F.},
  author={Hoteit, I.},
  title={Downscaling the 2D B\'{e}nard convection equations using continuous data assimilation},
  journal={Comput. Geosci.},
  volume={21},
  date={2017},
  number={3},
  pages={393--410},
}

\bib{Anthes_1974_JAS}{article}{
  author={Anthes, R.~A.},
  title={Data assimilation and initialization of hurricane prediction models},
  date={1974},
  journal={J. Atmos. Sci.},
  volume={31},
  number={3},
  pages={702\ndash 719},
}

\bib{Auroux-ANudging2008}{article}{
  author={Auroux, D.},
  author={Blum, J.},
  title={A nudging-based data assimilation method: the Back and Forth Nudging (BFN) algorithm},
  journal={Nonlin. Processes Geophys.},
  volume={15},
  pages={305–319},
  year={2008},
}

\bib{AOT14}{article}{
  author={Azouani, A.},
  author={ Olson, E. },
  author={Titi, E. S.},
  title={Continuous data assimilation using general interpolant observables},
  journal={J. Nonlinear Sci.},
  volume={24},
  year={2014},
  number={2},
  pages={277--304},
}

\bib{Azouani_Titi_2014}{article}{
  author={Azouani, A.},
  author={Titi, E.~S.},
  title={Feedback control of nonlinear dissipative systems by finite determining parameters---a reaction-diffusion paradigm},
  journal={Evol. Equ. Control Theory},
  volume={3},
  number={4},
  date={2014},
  pages={579--594},
  issn={2163-2480},
  doi={10.3934/eect.2014.3.579},
  url={http://dx.doi.org/10.3934/eect.2014.3.579},
}

\bib{Bessaih-Continuous2015}{article}{
  author={Bessaih, H.},
  author={Olson, E.},
  author={Titi, E. S.},
  title={Continuous data assimilation with stochastically noisy data},
  journal={Nonlinearity},
  volume={28},
  year={2015},
  number={3},
  pages={729--753},
}

\bib{Biswas-Data2021}{article}{
  author={A. Biswas},
  author={Z. Bradshaw},
  author={M. Jolly},
  title={Data assimilation for the {N}avier-{S}tokes equations using local observables},
  journal={SIAM J. Appl. Dyn. Sys.},
  volume={20},
  year={2021},
  number={4},
  pages={2174--2203},
}

\bib{Biswas-Continuous2018}{article}{
  author={Biswas, A.},
  author={Hudson, J},
  author={ Larios, A.},
  author={ Pei, Y.},
  title={Continuous data assimilation for the 2{D} magnetohydrodynamic equations using one component of the velocity and magnetic fields},
  journal={Asymptot. Anal.},
  volume={108},
  year={2018},
  number={1-2},
  pages={1--43},
}

\bib{Biswas_Price_2020_AOT3D}{article}{
  author={Biswas, A.},
  author={Price, R.},
  title={Continuous data assimilation for the three-dimensional {N}avier--{S}tokes equations},
  date={2021},
  journal={SIAM J. Math. Anal.},
  volume={53},
  number={6},
  pages={6697\ndash 6723},
}

\bib{Brezis2011}{book}{
  author={Brezis, H.},
  title={Functional Analysis, Sobolev Spaces and Partial Differential Equations},
  series={Universitext},
  publisher={Springer},
  address={New York},
  date={2011},
  pages={xiv+599},
  isbn={978-0-387-70913-0},
}

\bib{brocker2012sensitivity}{article}{
  author={Br\"ocker, J.},
  author={Szendro, I. G.},
  title={Sensitivity and out-of-sample error in continuous time data assimilation},
  journal={Q. J. R. Meteorol. Soc.},
  volume={138},
  number={664},
  date={2012},
  pages={785--801},
  publisher={Wiley},
}

\bib{Cao_Giorgini_Jolly_Pakzad_2022}{article}{
  author={Cao, Y.},
  author={Giorgini, A.},
  author={Jolly, M.},
  author={Pakzad, A.},
  title={Continuous data assimilation for the 3{D} {L}adyzhenskaya model: analysis and computations},
  date={2022},
  issn={1468-1218},
  journal={Nonlinear Anal. Real World Appl.},
  volume={68},
  pages={Paper No. 103659, 29},
  url={https://doi.org/10.1016/j.nonrwa.2022.103659},
}

\bib{CHLMNW22}{article}{
  author={Carlson, E.},
  author={Hudson, J.},
  author={Larios, A.},
  author={Martinez, V. R.},
  author={Ng, E.},
  author={Whitehead, J. P.},
  title={Dynamically learning the parameters of a chaotic system using partial observations},
  journal={Discrete Contin. Dyn. Syst.},
  volume={42},
  date={2022},
  number={8},
  pages={3809--3839},
  issn={1078-0947},
  doi={10.3934/dcds.2022033},
}

\bib{Carlson_Larios_Titi_2023_nlDA}{article}{
  author={Carlson, E.},
  author={Larios, A.},
  author={Titi, E. S.},
  title={Super-exponential convergence rate of a nonlinear continuous data assimilation algorithm: The 2D Navier--Stokes equations paradigm},
  journal={J. Nonlin. Sci.},
  volume={34},
  number={2},
  date={2024},
  pages={37},
  doi={10.1007/s00332-024-10014-w},
  url={https://link.springer.com/article/10.1007/s00332-024-10014-w},
  publisher={Springer},
}

\bib{Charney-Use1969}{article}{
  author={Charney, J. },
  author={Halem, M. },
  author={Jastrow, R.},
  title={Use of incomplete historical data to infer the present state of the atmosphere},
  journal={J. Atmos. Sci.},
  volume={26},
  year={1969},
  pages={1160-1163},
}

\bib{Chen_Li_Lunasin_2021}{article}{
  author={Chen, Nan},
  author={Li, Yuchen},
  author={Lunasin, Evelyn},
  title={An efficient continuous data assimilation algorithm for the Sabra shell model of turbulence},
  journal={Chaos},
  volume={31},
  number={10},
  date={2021},
  pages={103123},
  doi={10.1063/5.0057421},
}

\bib{CLP22}{article}{
  author={Chow, Y. T.},
  author={Leung, W. T.},
  author={Pakzad, A.},
  title={Continuous data assimilation for two-phase flow: analysis and simulations},
  journal={J. Comput. Phys.},
  volume={466},
  date={2022},
  pages={Paper No. 111395, 25},
}

\bib{Cibik2025}{article}{
  author={Çıbık, A.},
  author={Fang, R.},
  author={Layton, W.},
  author={Siddiqua, F.},
  title={Adaptive Parameter Selection in Nudging-Based Data Assimilation},
  journal={Computer Methods in Applied Mechanics and Engineering},
  volume={433},
  date={2025},
  pages={117526},
}

\bib{Cibik2025NA}{article}{
  author={Çıbık, A.},
  author={Fang, R.},
  author={Layton, W.},
  author={Siddiqua, F.},
  title={Data Assimilation with Model Errors},
  journal={arXiv preprint},
  date={2025},
  eprint={2504.16291},
}

\bib{DiLeoni_Clark_Mazzino_Biferale_2018_inferring}{article}{
  author={Clark Di~Leoni, P.},
  author={Mazzino, A.},
  author={Biferale, L.},
  title={Inferring flow parameters and turbulent configuration with physics-informed data assimilation and spectral nudging},
  date={2018},
  journal={Phys. Rev. Fluids},
  volume={3},
  number={10},
  pages={104604},
}

\bib{DiLeoni_Clark_Mazzino_Biferale_2019}{article}{
  author={Clark Di~Leoni, P.},
  author={Mazzino, A.},
  author={Biferale, L.},
  title={Synchronization to big data: Nudging the {N}avier--{S}tokes equations for data assimilation of turbulent flows},
  date={2020},
  journal={Phys. Rev. X},
  volume={10},
  number={1},
  pages={011023},
}

\bib{Du_Shiue_2021}{article}{
  author={Du, Y. J.},
  author={Shiue, M.},
  title={Analysis and computation of continuous data assimilation algorithms for Lorenz 63 system based on nonlinear nudging techniques},
  journal={J. Computat. Appl. Math.},
  volume={386},
  date={2021},
  pages={113246},
  issn={0377-0427},
  doi={10.1016/j.cam.2020.113246},
  url={https://www.sciencedirect.com/science/article/pii/S0377042720305379},
}

\bib{Farhat-Data2020}{article}{
  author={Farhat, A.},
  author={Glatt-Holtz, N. E.},
  author={Martinez, V. R.},
  author={McQuarrie, S. A.},
  author={Whitehead, J. P.},
  title={Data assimilation in large Prandtl Rayleigh-B\'{e}nard convection from thermal measurements},
  journal={SIAM J. Appl. Dyn. Syst.},
  volume={19},
  date={2020},
  number={1},
  pages={510--540},
}

\bib{Farhat-Assimilation2018}{article}{
  author={Farhat, A.},
  author={Johnston, H.},
  author={Jolly, M.},
  author={Titi, E. S.},
  title={Assimilation of nearly turbulent Rayleigh-B\'{e}nard flow through vorticity or local circulation measurements: a computational study},
  journal={J. Sci. Comput.},
  volume={77},
  date={2018},
  number={3},
  pages={1519--1533},
}

\bib{Farhat-Continuous2015}{article}{
  author={Farhat, A.},
  author={Jolly, M.},
  author={ Titi, E. S.},
  title={Continuous data assimilation for the 2{D} {B}\'{e}nard convection through velocity measurements alone},
  journal={Phys. D},
  volume={303},
  year={2015},
  pages={59--66},
}

\bib{FLMW24}{article}{
  author={Farhat, A.},
  author={Larios, A.},
  author={Martinez, V. R.},
  author={Whitehead, J. P.},
  title={Identifying the body force from partial observations of a two-dimensional incompressible velocity field},
  journal={Phys. Rev. Fluids},
  volume={9},
  date={2024},
  number={5},
  pages={054602},
}

\bib{Farhat-Continuous2017}{article}{
  author={Farhat, A.},
  author={Lunasin, E.},
  author={Titi, E. S.},
  title={Continuous data assimilation for a 2{D} {B}\'{e}nard convection system through horizontal velocity measurements alone},
  journal={J. Nonlinear Sci.},
  volume={27},
  year={2017},
  number={3},
  pages={1065--1087},
}

\bib{Farhat-Data2016}{article}{
  author={Farhat, A.},
  author={Lunasin, E.},
  author={Titi, E. S.},
  title={Data assimilation algorithm for 3{D} {B}\'{e}nard convection in porous media employing only temperature measurements},
  journal={J. Math. Anal. Appl.},
  volume={438},
  year={2016},
  number={1},
  pages={492--506},
}

\bib{FJT}{article}{
  author={Farhat, A.},
  author={Jolly, M.},
  author={Titi, E. S.},
  title={Continuous data assimilation for the 2{D} {B}\'{e}nard convection through velocity measurements alone},
  journal={Phys. D},
  volume={303},
  year={2015},
  pages={59--66},
  issn={0167-2789},
}

\bib{FLT}{article}{
  author={Farhat, A.},
  author={Lunasin, E.},
  author={Titi, E. S.},
  title={Abridged continuous data assimilation for the 2{D} {N}avier-{S}tokes equations utilizing measurements of only one component of the velocity field},
  journal={J. Math. Fluid Mech.},
  volume={18},
  year={2016},
  number={1},
  pages={1--23},
  issn={1422-6928},
}

\bib{Foias-Sur1967}{article}{
  author={Foia\c {s}, C. },
  author={Prodi, G.},
  title={Sur le comportement global des solutions non-stationnaires des \'{e}quations de {N}avier-{S}tokes en dimension {$2$}},
  journal={Rend. Sem. Mat. Univ. Padova},
  volume={39},
  year={1967},
  pages={1--34},
}

\bib{Foias-ADiscrete2016}{article}{
  author={Foias, C.},
  author={Mondaini, C. F. },
  author={Titi, E. S.},
  title={A discrete data assimilation scheme for the solutions of the two-dimensional {N}avier-{S}tokes equations and their statistics},
  journal={SIAM J. Appl. Dyn. Syst.},
  volume={15},
  year={2016},
  number={4},
  pages={2109--2142},
}

\bib{F95}{book}{
  author={Frisch, U.},
  title={Turbulence},
  note={The Legacy of A. N. Kolmogorov},
  publisher={Cambridge University Press, Cambridge},
  date={1995},
  pages={xiv+296},
}

\bib{FMRT01}{book}{
  author={Foias, C.},
  author={Manley, O.},
  author={Rosa, R.},
  author={Temam, R.},
  title={Navier-Stokes Equations and Turbulence},
  series={Encyclopedia of Mathematics and its Applications},
  volume={83},
  publisher={Cambridge University Press},
  place={Cambridge},
  date={2001},
}

\bib{Garcia-Archilla-Uniform2020}{article}{
  author={Garc\'{\i }a-Archilla, B.},
  author={Novo, J.},
  author={Titi, E. S.},
  title={Uniform in time error estimates for a finite element method applied to a downscaling data assimilation algorithm for the Navier-Stokes equations},
  journal={SIAM J. Numer. Anal.},
  volume={58},
  date={2020},
  number={1},
  pages={410--429},
}

\bib{GPMC91}{article}{
  author={Germano, M.},
  author={Piomelli, U.},
  author={Moin, P.},
  author={Cabot, W. H.},
  title={A dynamic subgrid-scale eddy viscosity model},
  journal={Physics of Fluids A},
  volume={3},
  year={1991},
  pages={1760--1765},
}

\bib{Gesho-Acomputational2016}{article}{
  author={Gesho, M.},
  author={Olson, E.},
  author={Titi, E. S.},
  title={A computational study of a data assimilation algorithm for the two-dimensional {N}avier-{S}tokes equations},
  journal={Commun. Comput. Phys.},
  volume={19},
  year={2016},
  number={4},
  pages={1094--1110},
}

\bib{FreeFEM}{article}{
  author={Hecht, F.},
  title={New development in FreeFem++},
  journal={J. Numer. Math.},
  volume={20},
  year={2012},
  number={3-4},
  pages={251--265},
  issn={1570-2820},
}

\bib{Hoke_Anthes_1976_MWR}{article}{
  author={Hoke, J.~E.},
  author={Anthes, R.~A.},
  title={The initialization of numerical models by a dynamic-initialization technique},
  date={1976},
  journal={Mon. Weather Rev.},
  volume={104},
  number={12},
  pages={1551\ndash 1556},
}

\bib{Hudson-Numerical2019}{article}{
  author={Hudson, J.},
  author={Jolly, M.},
  title={Numerical efficacy study of data assimilation for the 2{D} magnetohydrodynamic equations},
  journal={J. Comput. Dyn.},
  volume={6},
  year={2019},
  number={1},
  pages={131--145},
}

\bib{Ibdah-Fully2020}{article}{
  author={Ibdah, H. A. },
  author={Mondaini, C. F.},
  author={ Titi, E. S.},
  title={Fully discrete numerical schemes of a data assimilation algorithm: uniform-in-time error estimates},
  journal={IMA J. Numer. Anal.},
  volume={40},
  year={2020},
  number={4},
  pages={2584--2625},
}

\bib{J04}{book}{
  author={John, V.},
  title={Large Eddy Simulation of Turbulent Incompressible Flows},
  series={Lecture Notes in Computational Science and Engineering},
  volume={34},
  publisher={Springer-Verlag},
  address={Berlin},
  year={2004},
}

\bib{Jolly-Continuous2019}{article}{
  author={Jolly, M. S.},
  author={Martinez, V. R.},
  author={Olson, E. J.},
  author={Titi, E. S.},
  title={Continuous data assimilation with blurred-in-time measurements of the surface quasi-geostrophic equation},
  journal={Chin. Ann. Math. Ser. B},
  volume={40},
  date={2019},
  number={5},
  pages={721--764},
}

\bib{JP23}{article}{
  author={Jolly, M. S.},
  author={Pakzad, A.},
  title={Data assimilation with higher order finite element interpolants},
  journal={Internat. J. Numer. Methods Fluids},
  volume={95},
  date={2023},
  number={3},
  pages={472--490},
}

\bib{Jolly-Determining2017}{article}{
  author={Jolly, M.},
  author={ Sadigov, T. },
  author={Titi, E. S.},
  title={Determining form and data assimilation algorithm for weakly damped and driven {K}orteweg--de {V}ries equation---{F}ourier modes case},
  journal={Nonlinear Anal. Real World Appl.},
  volume={36},
  year={2017},
  pages={287--317},
  issn={1468-1218},
}

\bib{Jolly-Adata2017}{article}{
  author={Jolly, M. S.},
  author={Martinez, V. R. },
  author={Titi, E. S.},
  title={A data assimilation algorithm for the subcritical surface quasi-geostrophic equation},
  journal={Adv. Nonlinear Stud.},
  volume={17},
  year={2017},
  number={1},
  pages={167--192},
  issn={1536-1365},
}

\bib{K60}{article}{
  author={Kalman, R. E. },
  title={A new approach to linear filtering and prediction problems},
  journal={J. Basic Eng.},
  volume={82},
  year={1960},
  number={1},
  pages={35--45},
}

\bib{K03}{book}{
  author={Kalnay, E.},
  title={Atmospheric modeling, data assimilation, and predictability},
  publisher={Cambridge Univ. Pr.},
  year={2003},
}

\bib{Lady68}{article}{
  author={Ladyzhenskaja, O. A.},
  title={Modifications of the {N}avier--{S}tokes equations for large gradients of the velocities},
  journal={Zap. Nau\v {c}n. Sem. Leningrad. Otdel. Mat. Inst. Steklov. (LOMI)},
  pages={126--154},
  volume={7},
  year={1968},
}

\bib{Lady67}{article}{
  author={Ladyzhenskaya, O. A.},
  title={New equations for the description of the motions of viscous incompressible fluids, and global solvability for their boundary value problems},
  journal={Trudy Mat. Inst. Steklov.},
  volume={102},
  year={1967},
  pages={85--104},
  issn={0371-9685},
}

\bib{lakshmivarahan2013nudging}{incollection}{
  author={Lakshmivarahan, S.},
  author={Lewis, John M.},
  title={Nudging methods: A critical overview},
  book={ title={Data Assimilation for Atmospheric, Oceanic and Hydrologic Applications}, volume={II}, publisher={Springer}, date={2013}, },
  pages={27--57},
}

\bib{LRZ19}{article}{
  author={Larios, A.},
  author={Rebholz, L.},
  author={Zerfas, C.},
  title={Global in time stability and accuracy of {IMEX}-{FEM} data assimilation schemes for {N}avier-{S}tokes equations},
  journal={Comput. Methods Appl. Mech. Engrg.},
  volume={345},
  year={2019},
  pages={1077--1093},
}

\bib{Larios_Pei_2018_NSV_DA}{article}{
  author={Larios, A.},
  author={Pei, Y.},
  title={Approximate continuous data assimilation of the 2D Navier--Stokes equations via the Voigt-regularization with observable data},
  journal={Evol. Equ. Control Theory},
  volume={9},
  number={3},
  date={2020},
  pages={733--751},
  issn={2163-2472},
  doi={10.3934/eect.2020031},
  url={https://doi.org/10.3934/eect.2020031},
}

\bib{Larios_Pei_2017_KSE_DA_NL}{article}{
  author={Larios, A.},
  author={Pei, Y.},
  title={Nonlinear continuous data assimilation},
  journal={Evol. Equ. Control Theory},
  volume={13},
  number={2},
  date={2024},
  pages={329--348},
  doi={10.3934/eect.2023048},
  url={https://doi.org/10.3934/eect.2023048},
}

\bib{Larios_Pei_Victor_2023_second_best}{article}{
  author={Larios, A.},
  author={Victor, C.},
  title={The second-best way to do sparse-in-time continuous data assimilation: {I}mproving convergence rates for the 2{D} and 3{D} {N}avier--{S}tokes equations},
  eprint={arXiv 2303.03495},
  url={https://arxiv.org/abs/2303.03495},
  note={25 pp. (submitted)},
}

\bib{Law_Stuart_Zygalakis_2015_book}{book}{
  author={Law, K.},
  author={Stuart, A.},
  author={Zygalakis, K.},
  title={A {M}athematical {I}ntroduction to {D}ata {A}ssimilation},
  series={Texts in Applied Mathematics},
  publisher={Springer, Cham},
  date={2015},
  volume={62},
  isbn={978-3-319-20324-9; 978-3-319-20325-6},
}

\bib{L08}{book}{
  author={Layton, W.},
  title={Introduction to the numerical analysis of incompressible viscous flows},
  series={Computational Science \& Engineering},
  volume={6},
  publisher={Society for Industrial and Applied Mathematics (SIAM), Philadelphia, PA},
  year={2008},
  pages={xx+213},
}

\bib{L67}{article}{
  author={Lilly, D. K. },
  title={The representation of small-scale turbulence in numerical simulation experiments},
  journal={IBM Scientific Computing Symposium on Environmental Sciences, New York, USA},
  year={1967},
  pages={195--210},
}

\bib{LT17}{article}{
  author={Lunasin, E.},
  author={Titi, E. S.},
  title={Finite determining parameters feedback control for distributed nonlinear dissipative systems---a computational study},
  journal={Evol. Equ. Control Theory},
  volume={6},
  date={2017},
  number={4},
  pages={535--557},
}

\bib{Markowich-Continuous2019}{article}{
  author={Markowich, P. A.},
  author={Titi, E. S.},
  author={Trabelsi, S.},
  title={Continuous data assimilation for the three-dimensional Brinkman-Forchheimer-extended Darcy model},
  journal={Nonlinearity},
  volume={29},
  date={2016},
  number={4},
  pages={1292--1328},
  issn={0951-7715},
}

\bib{Mondaini-Uniform2018}{article}{
  author={Mondaini, C. F. },
  author={Titi, E. S.},
  title={Uniform-in-time error estimates for the postprocessing {G}alerkin method applied to a data assimilation algorithm},
  journal={SIAM J. Numer. Anal.},
  volume={56},
  year={2018},
  number={1},
  pages={78--110},
}

\bib{Newey2024}{article}{
  author={Newey, J.},
  author={Whitehead, J. P.},
  author={Carlson, E.},
  title={Model discovery on the fly using continuous data assimilation},
  journal={arXiv e-prints},
  date={2024},
  note={arxiv2411.13561},
  url={https://doi.org/10.48550/arXiv.2411.13561},
}

\bib{Pawar-Long2020}{article}{
  author={Pawar, S.},
  author={Ahmed, S. E.},
  author={San, O.},
  author={Rasheed, A.},
  author={Navon, I. M.},
  title={Long short-term memory embedded nudging schemes for nonlinear data assimilation of geophysical flows},
  journal={Physics of Fluids},
  volume={32},
  year={2020},
}

\bib{Pei-Continuous2019}{article}{
  author={Pei, Y.},
  title={Continuous data assimilation for the 3{D} primitive equations of the ocean},
  journal={Commun. Pure Appl. Anal.},
  volume={18},
  year={2019},
  number={2},
  pages={643--661},
  issn={1534-0392},
}

\bib{P04}{article}{
  author={Pope, S. B.},
  title={Ten questions concerning the large-eddy simulation of turbulent flows},
  journal={New J. Phys.},
  volume={18},
  year={2004},
  number={6},
}

\bib{Pope_2000_bible}{book}{
  author={Pope, S. B.},
  title={Turbulent {F}lows},
  publisher={Cambridge University Press, Cambridge},
  date={2000},
  isbn={0-521-59886-9},
  url={http://dx.doi.org/10.1017/CBO9780511840531},
}

\bib{Sharan2019ShearLayer}{article}{
  author={Sharan, N.},
  author={Matheou, G.},
  author={Dimotakis, P. E.},
  title={Turbulent shear-layer mixing: initial conditions, and direct-numerical and large-eddy simulations},
  journal={Journal of Fluid Mechanics},
  volume={880},
  date={2019},
  pages={1005--1042},
}

\bib{SM63}{article}{
  author={Smagorinsky, J.},
  title={General circulation experiments with the primitive equations. I. The basic experiment},
  journal={Mon. Weather Rev.},
  volume={91},
  year={1963},
  pages={99--164},
}

\bib{T77}{book}{
  author={R. Temam},
  title={Navier-Stokes equations. Theory and numerical analysis},
  series={Studies in Mathematics and its Applications, Vol. 2},
  publisher={North-Holland Publishing Co., Amsterdam-New York-Oxford},
  date={1977},
  pages={x+500},
  isbn={0-7204-2840-8},
}

\bib{Wilcox06}{book}{
  author={Wilcox, D. C.},
  title={Turbulence Modeling for CFD},
  edition={3},
  publisher={DCW Industries},
  address={La Canada, CA},
  year={2006},
}

\bib{ZRSI19}{article}{
  author={Zerfas, C.},
  author={Rebholz, L. G.},
  author={Schneier, M.},
  author={Iliescu, T.},
  title={Continuous data assimilation reduced order models of fluid flow},
  journal={Comput. Methods Appl. Mech. Engrg.},
  volume={357},
  date={2019},
  pages={112596, 18},
}
\end{biblist}
\end{bibdiv}

\end{document}